\documentclass[11pt]{article}

\usepackage[margin=1.12in]{geometry}
\usepackage{amsmath,amssymb,amsthm,mathtools}
\usepackage{booktabs}
\usepackage{enumitem}
\usepackage{microtype}
\usepackage{hyperref}

\hypersetup{
  colorlinks=true,
  linkcolor=blue,
  citecolor=blue,
  urlcolor=blue
}

\newtheorem{theorem}{Theorem}[section]
\newtheorem{lemma}[theorem]{Lemma}
\newtheorem{corollary}[theorem]{Corollary}
\newtheorem{proposition}[theorem]{Proposition}

\theoremstyle{definition}
\newtheorem{definition}[theorem]{Definition}
\theoremstyle{remark}
\newtheorem{remark}[theorem]{Remark}

\newcommand{\F}{\mathbb F}
\newcommand{\Gr}{\operatorname{Gr}}
\newcommand{\Hom}{\operatorname{Hom}}
\newcommand{\Mat}{\operatorname{Mat}}
\newcommand{\rank}{\operatorname{rank}}
\newcommand{\cdim}{\operatorname{cdim}}

\newcommand{\Unif}{\operatorname{Unif}}
\newcommand{\Prob}{\mathbb P}

\newcommand{\eps}{\varepsilon}
\newcommand{\qbinom}[2]{\genfrac{[}{]}{0pt}{}{#1}{#2}_q}
\newcommand{\Pts}{\operatorname{Pts}}

\title{Default-Distance Entropy and Metric Dimension in Finite Geometries}
\author{Maximiliano Vazquez}
\date{}

\begin{document}
\maketitle

\begin{abstract}
A resolving set in a graph is a set of landmarks whose distance vectors
distinguish all vertices.  We use information theory to prove lower bounds
for metric dimension and class dimension in finite geometric
distance-regular graphs and association schemes.  The core idea is that, for
a fixed landmark, a random object usually lies in one overwhelmingly likely
distance or relation class.  For classical dual polar graphs, with rank and
type fixed and $q\to\infty$ through the admissible field orders, we prove
$\mu(\Gamma(q,d,e))=\Theta_{d,e}(q^e)$ for $d\geq2$ and $e>0$.  The lower bound uses opposition as the typical distance, while the upper bound takes all generators containing each of a constant number of \((d-1)\)-dimensional singular subspaces.  For Grassmann
graphs, bilinear forms graphs, and attenuated-space schemes, we obtain lower
bounds of the same exponential order as the known incidence constructions.
\end{abstract}
\noindent\textbf{2020 Mathematics Subject Classification:}
Primary 05C12; Secondary 05E30, 05B25, 94A17.

\medskip
\noindent\textbf{Keywords:}
metric dimension, resolving sets, distance-regular graphs,
association schemes, finite geometry, entropy.
\section{Introduction}

Given a finite connected simple graph $G$, a set $S\subseteq V(G)$ 
resolves
$G$ if every vertex $x$ is uniquely determined by its vector of distances
\[
        x\longmapsto (d(s,x))_{s\in S}.
\]
The minimum size of such a set is the metric dimension of $G$, denoted 
here
by $\mu(G)$.\footnote{Some graph theory papers write $\beta(G)$, while
others use $\dim(G)$.  We reserve $\dim$ for vector space dimension.}
This language was introduced in graph theory by Slater under the name
locating sets and independently by Harary and Melter under the name metric
dimension \cite{Slater1975,HararyMelter1976}; the same idea had appeared
earlier in Blumenthal's work on distance geometry \cite{Blumenthal1953}.
The interpretation in terms of landmarks was emphasized by Khuller,
Raghavachari, and Rosenfeld \cite{KhullerRaghavachariRosenfeld}.

The most immediate lower bound is a counting bound.  If $G$ has diameter
$D$, then one landmark returns one of $D+1$ possible distances, and every
resolving set satisfies
\[
        |V(G)|\leq (D+1)^{|S|},
        \qquad
        \mu(G)\geq \frac{\log |V(G)|}{\log(D+1)}.
\]
This estimate treats the possible distance values as if they were equally
informative.  In many distance-regular graphs, however, almost every 
vertex
has the same distance from a fixed landmark.  If $X$ is uniform on $V(G)$
and $S$ resolves $G$, then its distance code is injective, so by entropy subadditivity,
\[
        \log |V(G)|=H(X)
        \leq \sum_{s\in S}H(d(s,X)).
\]
When $G$ is vertex-transitive, the summands have the same distribution.  
If
one distance occurs with probability $1-\eps$, then one coordinate carries
only
\[
        \eps\log(1/\eps)+O(\eps)
\]
bits of entropy.  This is the common mechanism behind the lower bounds in
the paper.

This information-theoretic viewpoint goes back to the coin-weighing work 
of
Erd\H{o}s--R\'enyi and Pippenger
\cite{ErdosRenyi1963,Pippenger1977}.  More recently, D\'{\i}az, Hartle, and
Moore used the same entropy subadditivity principle for sparse
Erd\H{o}s--R\'enyi graphs \cite{DiazHartleMoore2026}.  The entropy inequality
itself is standard, however appears to be somewhat underutilized in the metric dimension literature.  Our contribution is to identify and count the first
exceptional strata in several finite geometric families.  For dual polar
graphs, we also pair the entropy lower bound with a new upper construction
using panel pencils.

The graph families considered here belong to the classical theory of
distance-regular graphs; see Brouwer--Cohen--Neumaier \cite{BCN} and the
survey of van Dam--Koolen--Tanaka \cite{VanDamKoolenTanaka}.  Metric
dimension entered this setting systematically through Bailey and Cameron's
survey on bases, metric dimension, coherent configurations, and 
association
schemes \cite{BaileyCameron}.  In that language, resolving sets for 
coherent
configurations go back to Babai's distinguishing sets
\cite{Babai1981,BaileyCameron}.

Most of the subsequent finite geometric literature is constructive.
Bailey and Meagher used point--subspace incidence for Grassmann graphs
\cite{BaileyMeagher}; Feng and Wang treated bilinear forms graphs
\cite{FengWang}, with the remaining parameter cases completed by Guo, Li,
and Wang \cite{GuoLiWangFour}; Guo, Li, and Wang treated attenuated-space
schemes \cite{GuoLiWang}; and Bailey and Spiga bounded dual polar graphs by the 
real
rank of the point--generator incidence matrix \cite{BaileySpiga}.  These
constructions provide effective upper bounds but leave the corresponding
lower bound theory much less developed.

The strongest result of the present paper is for dual polar graphs.  
Bailey
and Spiga explicitly observed that, beyond the generalized quadrangle 
case,
no general lower bound was known for dual polar graphs of arbitrary
diameter \cite{BaileySpiga}.  For every fixed rank $d$ and every classical
parameter $e>0$, we prove
\[
        \mu(\Gamma(q,d,e))=\Theta_{d,e}(q^e).
\]
The lower bound comes from the fact that a random generator is opposite a
fixed generator with probability $1-q^{-e}+o(q^{-e})$.  The matching-order
upper bound uses the line geometry of the dual polar space: one samples a
constant number of panels and takes all $q^e+1$ generators in each
corresponding pencil.  These correlated bundles remove a logarithmic 
factor
that arises from naive independent sampling.  The result answers the 
lower bound
question of Bailey and Spiga in the fixed-rank regime and improves their
general incidence-rank upper bound by a polynomial factor. 

The remaining applications show that the same entropy principle is not
specific to polar spaces.  For Grassmann graphs, bilinear forms graphs, 
and
attenuated spaces, we obtain matching exponential orders against the known
incidence constructions.  The attenuated-space argument is fiberwise and
combines the Grassmann intersection obstruction with a rank-defect
obstruction from bilinear forms graphs.

\subsection*{Main results}

For the dual polar row below, set
\[
        A_{d,e}=de+\binom d2,\qquad
        k_{d,e}=\left\lfloor\frac{2A_{d,e}}{\min\{1,e\}}\right\rfloor+1.
\]
All asymptotics are in the indicated regimes, and suppressed constants
depend only on the fixed parameters.
\begin{center}
\small
\begin{tabular}{lll}
\toprule
Family & Regime & Bounds \\
\midrule
$G_q(n,k)$ &
$\substack{\text{fixed }q,k\\ n\to\infty}$ &
$\left(\dfrac{k(q-1)}{(q^k-1)^2}+o(1)\right)q^n
 \leq \mu(G_q(n,k))\leq \qbinom n1$ \cite{BaileyMeagher} \\
\addlinespace
$H_q(n,d)$ &
$\substack{\text{fixed }q,d\\ n\to\infty}$ &
$\left(\dfrac{d(q-1)}{q^d-1}+o(1)\right)q^n
 \leq \mu(H_q(n,d))\leq q^{n+d-1}$ \cite{FengWang,GuoLiWangFour} \\
\addlinespace
$\Omega_m$ &
$\substack{\text{fixed }q,m\geq2\\ n,\ell\to\infty}$ &
$\left(\dfrac{m(q-1)}{(q^m-1)^2}+o(1)\right)q^{n+\ell}
 \leq \cdim(\Omega_m)\leq q^\ell\qbinom n1$ \cite{GuoLiWang} \\
\addlinespace
$\Omega_1$ &
$n,\ell\geq1$ &
$\cdim(\Omega_1)=(q^\ell-1)\qbinom n1$ \\
\addlinespace
$\Gamma(q,d,e)$ &
$\substack{\text{fixed }d,\text{ type},e>0\\ q\to\infty}$ &
$\left(\dfrac{A_{d,e}}{e}+o(1)\right)q^e
 \leq \mu(\Gamma(q,d,e))\leq k_{d,e}(q^e+1)$ \\
\bottomrule
\end{tabular}
\end{center}

Thus all four families are determined up to constant factors in the stated
regimes.  The dual polar result is the sharpest qualitative advance: to our
knowledge, it supplies the first general lower bound for arbitrary fixed rank,
replaces the previous
polynomially larger incidence upper bounds by $O_{d,e}(q^e)$, and leaves 
the
leading constant as the main unresolved issue.  The $m=1$ row is the exact
formula proved in Proposition~\ref{obs:rank-one}.

\subsection*{Organization}

Section~\ref{sec:entropy} gives the entropy principle for distance and
relation codes.  Section~\ref{sec:grassmann} isolates the intersection
obstruction in Grassmann graphs, and Section~\ref{sec:bilinear} isolates the
rank-defect obstruction in bilinear forms graphs.  Section~\ref{sec:attenuated}
combines the two in the graph-of-a-map model.  Section~\ref{sec:dualpolar}
proves the dual-polar lower bound and then builds the matching order upper
bound from panel pencils.

\section{Entropy of distance and relation codes}\label{sec:entropy}

All logarithms are base \(2\), except that \(\ln\) denotes the natural 
logarithm.

Let \(G\) be a finite connected graph.  For \(S\subseteq V(G)\), write
\[
        \Phi_S(x)=(d(s,x))_{s\in S}
\]
for the distance code of \(x\).  The set \(S\) resolves a set of vertices 
\(T\) precisely when \(\Phi_S\) is injective on \(T\).  If \(X\) is a 
random vertex supported on \(T\), then an injective code determines \(X\), 
and hence
\[
        H(X)=H(\Phi_S(X))
        \leq
        \sum_{s\in S} H(d(s,X)).
\]

\begin{theorem}\label{thm:vt-entropy}
Let \(G\) be a finite connected vertex-transitive graph, let 
\(X\sim\Unif(V(G))\), and fix \(s_0\in V(G)\).  Then
\[
        \mu(G)
        \geq
        \frac{\log |V(G)|}{H(d(s_0,X))},
\]
provided the denominator is nonzero.
\end{theorem}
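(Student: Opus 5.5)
The plan is to combine the displayed entropy subadditivity inequality with vertex-transitivity, which makes every coordinate of the distance code carry the same entropy. Let \(S\) be a resolving set of minimum size \(\mu(G)\), and let \(X\sim\Unif(V(G))\).

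First, since \(S\) resolves \(G\), the map \(\Phi_S\) is injective on \(V(G)\). Hence \(X\) is a function of \(\Phi_S(X)\), and
\[
        \log|V(G)| = H(X) = H(\Phi_S(X)) \leq \sum_{s\in S} H(d(s,X)),
\]
where the last step is subadditivity of joint entropy. This is exactly the inequality displayed before the theorem, now applied with \(T=V(G)\).

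Second, we show that each summand equals \(H(d(s_0,X))\). For each \(s\in S\), vertex-transitivity gives an automorphism \(\sigma\) with \(\sigma(s_0)=s\). Automorphisms preserve distances, so \(d(s,X)=d(\sigma(s_0),X)=d(s_0,\sigma^{-1}(X))\). Because \(\sigma^{-1}\) is a bijection of \(V(G)\), the variable \(\sigma^{-1}(X)\) is again uniform on \(V(G)\). Therefore \(d(s,X)\) has the same distribution as \(d(s_0,X)\), and the two have equal entropy.

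Third, combining the two steps gives \(\log|V(G)|\leq |S|\,H(d(s_0,X))=\mu(G)\,H(d(s_0,X))\). Dividing by the denominator, which is nonzero by hypothesis and hence positive, yields the claim.

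Two remarks on edge cases. If the denominator vanishes, then \(G\) has a single vertex; the hypothesis excludes this. If \(G\) has one vertex, the empty set resolves it and both sides are zero, so this case is excluded and nothing further is needed. No step is a real obstacle; the only point needing care is the distributional equality in the second step, which is settled by the automorphism pushforward argument.
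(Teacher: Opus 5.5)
Your proof is correct and is essentially the paper's argument: injectivity of $\Phi_S$ together with subadditivity gives $\log|V(G)|\le\sum_{s\in S}H(d(s,X))$, and the automorphism pushforward shows that every summand equals $H(d(s_0,X))$. One harmless slip in your edge-case remark: for a one-vertex graph the right-hand side is $0/0$, not zero, but that case is excluded by hypothesis anyway.
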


\begin{proof}
For any resolving set \(S\), the preceding inequality gives
\[
        \log |V(G)| \leq \sum_{s\in S} H(d(s,X)).
\]
If \(g(s_0)=s\), then \(d(s,X)=d(g(s_0),X)=d(s_0,g^{-1}X)\), and 
\(g^{-1}X\) is again uniform.  Thus each summand equals \(H(d(s_0,X))\), 
and the result follows after minimizing over \(S\).
\end{proof}

We use the following convention for association schemes.  A symmetric 
association scheme on a finite set \(\Omega\) is a partition
\[
        \Omega\times\Omega=R_0\sqcup R_1\sqcup\cdots\sqcup R_D
\]
with \(R_0=\{(x,x):x\in\Omega\}\), each \(R_i\) symmetric, and constants 
\(p_{ij}^k\) such that for every \((x,y)\in R_k\),
\[
        p_{ij}^k
        =
        |\{z\in\Omega:(x,z)\in R_i,\ (z,y)\in R_j\}|.
\]
Some authors use ``association scheme'' for the more general, not 
necessarily symmetric or commutative, object, and others build 
commutativity into the definition.  Here all schemes are symmetric, hence 
commutative, and the relation index is an unordered invariant of a pair.  

\begin{definition}\label{def:classdimension}
Let \((\Omega,\{R_0,\ldots,R_D\})\) be a symmetric association scheme.  
For \(s,x\in\Omega\), let \(i(s,x)\) be the unique index such that 
\((s,x)\in R_{i(s,x)}\).  A set \(S\subseteq\Omega\) is 
\emph{class-resolving} if
\[
        x\longmapsto (i(s,x))_{s\in S}
\]
is injective.  The least size of such an \(S\) is the \emph{class 
dimension}, denoted \(\cdim(\Omega)\).  When the scheme is the distance 
scheme of a distance-regular graph, this is ordinary metric dimension.
\end{definition}

The index set need not literally be \(\{0,\ldots,D\}\).  In later
examples, a pair such as \((i,j)\) is regarded as a single relation label.

The same entropy subadditivity applies to relation codes. If \(S\) is
class-resolving and \(X\) is any random element of a subset 
\(T\subseteq\Omega\)
on which the relation code is injective, then
\[
H(X)\le \sum_{s\in S} H(i(s,X)).
\]
We will use this localized form below, rather than a global transitivity
bound, because the attenuated-space argument is carried out fiberwise.

The next lemma is the only entropy estimate used below.  It is really just 
Fano's inequality in the case of a constant estimator, and we state it 
here to avoid writing out the same argument in the theorems below.

\begin{lemma}\label{lem:default}
Let \(R\) be a random variable taking values in a finite alphabet \(A\), 
with
\(|A|=M\geq2\).  Fix \(a\in A\), and set \(\varepsilon=\Pr(R\neq a).\)
Then
\[
        H(R)\leq H_2(\varepsilon)+\varepsilon\log(M-1),
\]
where
\[
        H_2(t)=-t\log t-(1-t)\log(1-t)
\]
is the binary entropy function, with \(0\log0=0\).
Moreover, if \(\varepsilon\leq\delta\leq1/2\), then
\[
        H(R)\leq \delta\log(1/\delta)+O_M(\delta).
\]
\end{lemma}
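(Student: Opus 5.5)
The plan is to prove the first inequality by the chain rule applied to the indicator of the default event, and then to get the second from monotonicity of the bound in \(\varepsilon\) on \([0,1/2]\).

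\emph{First inequality.} Let \(E=\mathbf 1[R\neq a]\), so \(E\) is a function of \(R\) with \(\Pr(E=1)=\varepsilon\). The chain rule gives
\[
H(R)=H(R,E)=H(E)+H(R\mid E)=H_2(\varepsilon)+\Pr(E=0)\,H(R\mid E=0)+\varepsilon\,H(R\mid E=1).
\]
On the event \(E=0\) the variable \(R\) is forced to equal \(a\), so \(H(R\mid E=0)=0\). On the event \(E=1\) it takes values in \(A\setminus\{a\}\), a set of size \(M-1\), so \(H(R\mid E=1)\leq\log(M-1)\). If \(\varepsilon\in\{0,1\}\), the corresponding conditional term simply does not appear. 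Combining these gives \(H(R)\leq H_2(\varepsilon)+\varepsilon\log(M-1)\).

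\emph{Second inequality.} Write \(f(t)=H_2(t)+t\log(M-1)\). On \([0,1/2]\), \(H_2\) is nondecreasing and \(t\log(M-1)\) is nondecreasing because \(M\geq2\), so \(f\) is nondecreasing there. Hence \(\varepsilon\leq\delta\leq1/2\) gives \(H(R)\leq f(\delta)\). It remains to bound \(f(\delta)\). We have
\[
H_2(\delta)=\delta\log(1/\delta)+(1-\delta)\log\frac{1}{1-\delta}.
\]
For \(\delta\leq1/2\) the second term is at most \(2\delta\log e\); for instance, use \(-\ln(1-\delta)\leq\delta/(1-\delta)\leq2\delta\). Therefore
\[
f(\delta)\leq\delta\log(1/\delta)+\bigl(2\log e+\log(M-1)\bigr)\delta,
\]
and the implied constant depends only on \(M\), as claimed.

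The only point needing care is the monotonicity step. It is what allows an upper bound \(\delta\) on \(\varepsilon\), rather than \(\varepsilon\) itself, to be substituted. The restriction \(\delta\leq1/2\) is exactly where \(H_2\) is increasing, so the hypothesis suffices, and the rest is routine.
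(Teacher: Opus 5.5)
Your proof is correct and follows the paper's argument: the chain rule applied to the indicator of $\{R\neq a\}$ for the first inequality, then monotonicity on $[0,1/2]$ together with the expansion of $H_2$ for the second. Your explicit estimate $(1-\delta)\log\frac{1}{1-\delta}\leq 2\delta\log e$ makes the paper's $O(\delta)$ term concrete and shows it holds on all of $[0,1/2]$, not only as $\delta\downarrow 0$.
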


\begin{proof}
Let \(E=\mathbf 1_{\{R\neq a\}}\). \(E\) is determined by \(R\), so the 
chain rule gives \( H(R)=H(E)+H(R\mid E)\). On \(E=0\), the value of \(R\) 
is fixed.  On \(E=1\), the value of \(R\) lies in the alphabet 
\(A\setminus\{a\}\), which has size \(M-1\).  Hence
\[
        H(R\mid E)
        =
        \varepsilon H(R\mid E=1)
        \leq
        \varepsilon\log(M-1),
\]
and \(H(E)=H_2(\varepsilon)\).  This proves the first inequality.

For the last assertion, use monotonicity of \(H_2\) on \([0,1/2]\) and the 
expansion
\[
        H_2(\delta)=\delta\log(1/\delta)+O(\delta)
        \qquad(\delta\downarrow0).
\]
The term \(\delta\log(M-1)\) is absorbed into \(O_M(\delta)\).
\end{proof}

\section{Grassmann graphs}\label{sec:grassmann}

Throughout the finite field applications, \(q\) is a prime power.  We
write \(\qbinom ab\) for a Gaussian coefficient and adopt the convention
that \(\qbinom ab=0\) when \(b<0\) or \(b>a\).

The Grassmann graph is the finite field analogue of the Johnson graph.  In 
the Johnson graph \(J(n,k)\), vertices are \(k\)-subsets of an \(n\)-set, 
and distance records the size of the intersection.  In \(G_q(n,k)\), 
vertices are \(k\)-dimensional subspaces of \(\F_q^n\), and intersection 
size is replaced by intersection dimension.  Bailey and Meagher initiated 
the metric dimension study of Grassmann graphs and proved the incidence 
upper bound used below \cite{BaileyMeagher}.

Throughout this section we assume \(1\leq k\leq n/2\).  This is automatic
for all sufficiently large \(n\) in the fixed-\(k\) regime.

The vertex set of \(G_q(n,k)\) is \(\Gr_q(n,k)\), the set of 
\(k\)-dimensional subspaces of \(\F_q^n\).  Two vertices are adjacent when 
they meet in dimension \(k-1\).  The distance formula is
\[
        d(U,W)=k-\dim(U\cap W).
\]
Thus a vertex at maximum distance from \(U\) is simply a \(k\)-subspace 
disjoint from \(U\).  The entropy calculation below says that a fixed 
landmark sees a random \(k\)-subspace at this maximum distance with 
overwhelming probability.

\begin{theorem}\label{thm:grassmann}
For fixed \(q\) and \(k\geq1\),
\[
        \mu(G_q(n,k))
        \geq
        \left(\frac{k(q-1)}{(q^k-1)^2}+o(1)\right)q^n
        \qquad (n\to\infty).
\]
Consequently,
\[
        \mu(G_q(n,k))=\Theta_{q,k}(q^n).
\]
More precisely,
\[
        \mu(G_q(n,k))
        \leq
        \qbinom n1
        =
        \left(\frac1{q-1}+o(1)\right)q^n.
\]
\end{theorem}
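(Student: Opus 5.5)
We apply Theorem~\ref{thm:vt-entropy} to \(G_q(n,k)\), which is vertex-transitive because \(\mathrm{GL}_n(\F_q)\) acts transitively on \(\Gr_q(n,k)\). Fix a landmark \(U\in\Gr_q(n,k)\) and let \(X\) be uniform on \(\Gr_q(n,k)\). The distance \(d(U,X)=k-\dim(U\cap X)\) takes at most \(k+1\) values, and the default value is \(k\), attained exactly when \(U\cap X=0\). Let \(\eps_n=\Pr(U\cap X\neq0)\). Then Lemma~\ref{lem:default} with \(M=k+1\) gives
\[
H(d(U,X))\leq \eps_n\log(1/\eps_n)+O_k(\eps_n).
\]
The numerator is
\[
\log|\Gr_q(n,k)|=\log\qbinom nk=k(n-k)\log q+O_{q,k}(1).
\]

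The main computation is the asymptotic value of \(\eps_n\). The number of \(k\)-subspaces disjoint from \(U\) is \(q^{k^2}\qbinom{n-k}{k}\), since they are the graphs of linear maps from a fixed complement. The cleaner route is a first-moment estimate. The expected number of nonzero vectors of \(U\) lying in \(X\) is \((q^k-1)\cdot\frac{q^k-1}{q^n-1}\). Dividing by \(q-1\) to count one-dimensional subspaces gives an expected number of points of \(U\) lying in \(X\) equal to
\[
\frac{(q^k-1)^2}{(q-1)(q^n-1)}.
\]
The expected number of two-dimensional subspaces of \(U\cap X\) is \(O(q^{-2n})\), so Bonferroni gives
\[
\eps_n=\frac{(q^k-1)^2}{q-1}\,q^{-n}\,(1+o(1)).
\]
Then
\[
\eps_n\log(1/\eps_n)=\eps_n\bigl(n\log q+O(1)\bigr),
\]
and the \(O_k(\eps_n)\) term is of lower order. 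Hence
\[
H(d(U,X))\leq \frac{(q^k-1)^2}{q-1}\,q^{-n}\,n\log q\,(1+o(1)).
\]
Dividing, and using \(k(n-k)/n\to k\), we obtain
\[
\mu\geq \frac{k(q-1)}{(q^k-1)^2}\,q^n\,(1+o(1)).
\]
This is the claimed lower bound.

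For the upper bound, we show that the set of all points (one-dimensional subspaces), embedded as a vertex set, resolves the graph. A \(k\)-subspace is determined by the set of points it contains. The natural landmark choice is the family \(\{W_P\}\) of \(k\)-subspaces, one for each point \(P\), chosen so that \(d(W_P,X)\) detects whether \(P\subseteq X\). This is exactly the Bailey--Meagher construction, and we cite \cite{BaileyMeagher} for its validity, giving
\[
\mu\leq\qbinom n1=\frac{q^n-1}{q-1}=\left(\frac{1}{q-1}+o(1)\right)q^n.
\]
The \(\Theta\) statement follows immediately.

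The main obstacle is the precise asymptotic for \(\eps_n\) with the correct constant, and checking that the error terms are genuinely \(o(\eps_n\, n)\). We rely on the exact Gaussian-coefficient expressions and the second-order Bonferroni bound, so that the leading constant \((q^k-1)^2/(q-1)\) emerges cleanly rather than only up to a constant factor.
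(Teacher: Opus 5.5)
Your argument is correct and is essentially the paper's proof. Both use vertex-transitivity with Theorem~\ref{thm:vt-entropy}, the default distance $k$ with exceptional event $U\cap X\neq0$, the union (first-moment) bound over the $\qbinom k1$ points of $U$, Lemma~\ref{lem:default}, and the citation of \cite{BaileyMeagher} for the upper bound. The differences are minor:
\begin{itemize}
\item Your Bonferroni lower bound on $\eps_n$ is not needed. The paper applies the $\delta$-form of Lemma~\ref{lem:default} directly to the union-bound value $\delta_n$; monotonicity of $H_2$ means an upper bound on $\eps_n$ suffices.
\item Your gloss of the Bailey--Meagher construction is not how it works. No single landmark detects whether $P\leq X$. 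Instead, the distance vector determines the intersection counts with suitably chosen $k$-subspaces, and a full-rank point--subspace incidence matrix then recovers the point set of $X$. This is harmless because you cite the result.
\item The case $k=1$ should be treated separately, since there the graph is complete.
\end{itemize}
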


\begin{proof}
For \(k\geq2\), the upper bound is the point--subspace incidence
construction of Bailey and Meagher \cite[Theorem~5]{BaileyMeagher}; for
\(k=1\), the graph is complete and the displayed upper bound is immediate.
We prove the lower bound.

The action of \(\operatorname{GL}_n(q)\) on \(\Gr_q(n,k)\) is transitive,
so Theorem~\ref{thm:vt-entropy} applies.

Fix a \(k\)-subspace \(U\leq\F_q^n\), and let \(X\) be uniform on 
\(\Gr_q(n,k)\).  The default distance from \(U\) is \(k\), and the 
exceptional event is \(U\cap X\neq0\).  If \(U\cap X\neq0\), then \(U\cap 
X\) contains at least one one-dimensional subspace \(L\leq U\).  Therefore
\[
        \{U\cap X\neq0\}
        \subseteq
        \bigcup_{\substack{L\leq U\\ \dim L=1}}\{L\leq X\}.
\]
For a fixed one-dimensional subspace \(L\leq U\),
\[
        \Prob(L\leq X)
        =
        \frac{\qbinom{n-1}{k-1}}{\qbinom nk}
        =
        \frac{q^k-1}{q^n-1},
\]
where we used
\[
        \qbinom nk
        =
        \frac{q^n-1}{q^k-1}\qbinom{n-1}{k-1}.
\]
Since \(U\) contains \(\qbinom k1=(q^k-1)/(q-1)\) one-dimensional
subspaces, the union bound 
gives
\[
        \eps_n:=\Prob(U\cap X\neq0)
        \leq
        \delta_n:=\qbinom k1\frac{q^k-1}{q^n-1}
        =
        \left(\frac{(q^k-1)^2}{q-1}+o(1)\right)q^{-n}.
\]
Set
\[
        C_{q,k}=\frac{(q^k-1)^2}{q-1}.
\]
Then \(\delta_n=(C_{q,k}+o(1))q^{-n}\), so
\[
        \log(1/\delta_n)=n\log q+O_{q,k}(1).
\]
In particular, \(\delta_n\leq1/2\) for all sufficiently large \(n\).
The response \(d(U,X)\) has the fixed alphabet \(\{0,1,\ldots,k\}\).  
Lemma~\ref{lem:default} therefore gives
\[
        H(d(U,X))
        \leq
        \delta_n\log(1/\delta_n)+O_{q,k}(\delta_n)
        =
        (C_{q,k}+o(1))nq^{-n}\log q.
\]
Also
\[
        \log |\Gr_q(n,k)|
        =
        \log\qbinom nk
        =
        k(n-k)\log q+O_{q,k}(1)
        =
        kn\log q\,(1+o(1)).
\]
Substituting these estimates into Theorem~\ref{thm:vt-entropy} gives
\[
        \mu(G_q(n,k))
        \geq
        \frac{kn\log q\,(1+o(1))}
        {\left(\frac{(q^k-1)^2}{q-1}+o(1)\right)nq^{-n}\log q}
        =
        \left(\frac{k(q-1)}{(q^k-1)^2}+o(1)\right)q^n.
\]
\end{proof}

\section{Bilinear forms graphs}\label{sec:bilinear}

Grassmann graphs are governed by whether two subspaces intersect.  Bilinear
forms graphs give the parallel rank-defect case.  They are another classical
family of distance-regular graphs and may be viewed as rank-metric analogues of Hamming graphs:
the ambient set is a vector space of matrices, and the distance between 
two matrices is the rank of their difference rather than the number of 
coordinates in which they differ.  In the language of association schemes, 
this is the bilinear forms scheme, one of the standard finite-geometric 
schemes in the classical list \cite[Ch.~9]{BCN}.  Feng and Wang studied 
its metric dimension by constructing resolving sets from incidence data 
\cite{FengWang}.

Let \(H_q(n,d)\) have vertex set \(\Mat_{n\times d}(\F_q)\), with
\[
        d(A,B)=\rank(A-B).
\]
For \(n\geq d\), the generic distance from a fixed landmark is \(d\), 
because a random \(n\times d\) matrix usually has full column rank. 

\begin{theorem}\label{thm:bilinear}
For fixed \(q\) and \(d\geq1\),
\[
        \mu(H_q(n,d))
        \geq
        \left(\frac{d(q-1)}{q^d-1}+o(1)\right)q^n
        \qquad (n\to\infty).
\]
Feng and Wang prove \(\mu(H_q(n,d))\le q^{n+d-1}\) for \(n\ge d+2\)
\cite[Theorem~1.1]{FengWang}, and Guo, Li, and Wang extend the same bound to
\(n=d,d+1\) \cite{GuoLiWangFour}.  Thus the bound holds for \(n\geq d\), so
\(\mu(H_q(n,d))=\Theta_{q,d}(q^n)\).  The leading constants are still far
apart: the upper construction has constant \(q^{d-1}\), whereas the 
entropy lower constant is \(d(q-1)/(q^d-1)\).
\end{theorem}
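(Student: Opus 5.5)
The argument mirrors the proof of Theorem~\ref{thm:grassmann}, with intersection replaced by rank defect. The graph \(H_q(n,d)\) is vertex-transitive, since translations \(X\mapsto X+B\) are automorphisms. Hence Theorem~\ref{thm:vt-entropy} applies with landmark \(s_0=0\), and the response is \(d(0,X)=\rank X\) for \(X\) uniform on \(\Mat_{n\times d}(\F_q)\). For \(n\geq d\) the default value is \(d\), and the exceptional event is \(\rank X<d\), i.e.\ \(X\) has a nonzero kernel vector \(v\in\F_q^d\).

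To bound the exceptional probability, apply a union bound over the one-dimensional subspaces \(\langle v\rangle\leq\F_q^d\); there are \(\qbinom d1=(q^d-1)/(q-1)\) of them. For a fixed nonzero \(v\), the vector \(Xv\) is uniform on \(\F_q^n\), because \(X\mapsto Xv\) is a surjective linear map. Therefore \(\Prob(Xv=0)=q^{-n}\), and
\[
        \eps_n=\Prob(\rank X<d)\leq \delta_n:=\frac{q^d-1}{q-1}\,q^{-n}.
\]
Then \(\log(1/\delta_n)=n\log q+O_{q,d}(1)\), and \(\delta_n\leq1/2\) for large \(n\). The alphabet \(\{0,\ldots,d\}\) is fixed, so Lemma~\ref{lem:default} gives
\[
        H(\rank X)\leq\Bigl(\frac{q^d-1}{q-1}+o(1)\Bigr)nq^{-n}\log q.
\]

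The numerator is exact: \(\log|V(H_q(n,d))|=nd\log q\). Dividing as in Theorem~\ref{thm:vt-entropy} yields
\[
        \mu(H_q(n,d))\geq\Bigl(\frac{d(q-1)}{q^d-1}+o(1)\Bigr)q^n.
\]
The upper bound \(q^{n+d-1}\) is quoted from \cite{FengWang,GuoLiWangFour}. Since \(d\) and \(q\) are fixed, it is \(O_{q,d}(q^n)\), and combining it with the lower bound gives \(\Theta_{q,d}(q^n)\).

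No step is a serious obstacle. The only points needing care are that \(Xv\) is exactly uniform, so the union bound has no error term, and that the \(n\) factors cancel between numerator and denominator, which leaves the stated constant.
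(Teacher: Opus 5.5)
Your proof is correct and follows the paper's structure: translation-transitivity, then Lemma~\ref{lem:default} on the rank alphabet \(\{0,\ldots,d\}\), then the quotient in Theorem~\ref{thm:vt-entropy}. The one difference is the estimate of \(\Prob(\rank X<d)\): you use a union bound over the \(\qbinom d1\) kernel lines, mirroring the Grassmann proof, whereas the paper computes the probability exactly as \(1-\prod_{i=0}^{d-1}(1-q^{i-n})\) and expands the product. Lemma~\ref{lem:default} needs only an upper bound \(\delta_n\), so your bound \(\frac{q^d-1}{q-1}q^{-n}\) suffices and gives the same constant. The paper's exact formula additionally shows that this first-order term is sharp.
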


\begin{proof}
Matrix translations \(X\mapsto X+C\) are graph automorphisms and act
transitively on the vertices, so Theorem~\ref{thm:vt-entropy} applies.

Let \(X\) be a uniform \(n\times d\) matrix.  For a fixed landmark \(A\), 
the map \(X\mapsto X-A\) is a bijection of \(\Mat_{n\times d}(\F_q)\).  
Hence \(X-A\) is uniform whenever \(X\) is uniform, and the distribution 
of \(d(A,X)=\rank(X-A)\) is the distribution of \(R_n=\rank X\), 
independent of \(A\).  The default rank is \(d\), and the exceptional 
probability is
\[
        p_n=\Prob(R_n<d).
\]
The columns of \(X\) are linearly independent exactly when the \(i\)-th 
new column avoids the span of the previous \(i\) columns for 
\(i=0,\ldots,d-1\).  Therefore
\[
        \Prob(R_n=d)=\prod_{i=0}^{d-1}(1-q^{i-n}).
\]
Expanding the finite product,
\[
        p_n
        =
        1-\prod_{i=0}^{d-1}(1-q^{i-n})
        =
        \sum_{i=0}^{d-1}q^{i-n}+O_{q,d}\left(\sum_{0\leq i<j\leq 
d-1}q^{i+j-2n}\right)
        =
        \left(\frac{q^d-1}{q-1}+o(1)\right)q^{-n}.
\]
Set
\[
        C_{q,d}=\frac{q^d-1}{q-1}.
\]
Then \(p_n=(C_{q,d}+o(1))q^{-n}\), and hence
\[
        \log(1/p_n)=n\log q+O_{q,d}(1).
\]
In particular, \(p_n\leq1/2\) for all sufficiently large \(n\).
Since the rank response has the fixed alphabet \(\{0,1,\ldots,d\}\), 
Lemma~\ref{lem:default} gives
\[
        H(R_n)
        \leq
        p_n\log(1/p_n)+O_{q,d}(p_n)
        =
        (C_{q,d}+o(1))nq^{-n}\log q.
\]
Since \(|V(H_q(n,d))|=q^{nd}\), Theorem~\ref{thm:vt-entropy} gives
\[
        \mu(H_q(n,d))
        \geq
        \frac{nd\log q}
        {\left(\frac{q^d-1}{q-1}+o(1)\right)nq^{-n}\log q}
        =
        \left(\frac{d(q-1)}{q^d-1}+o(1)\right)q^n.
\]
\end{proof}

\begin{remark}
The attenuated-space argument below also recovers this lower bound, but
keeping the calculation here isolates the rank-defect mechanism.  At the
boundary \(m=n\), there is a single base
subspace \(W\), and \(\Omega_n\) identifies with the rank-metric space
\(\Hom(W,E)\).  In the asymptotic regime \(\ell\to\infty\) with \(m\)
fixed, this is \(H_q(\ell,m)\).  The bilinear proof is the cleanest instance
of the rank-defect 
obstruction, and we will reuse the same calculation in
Section~\ref{sec:attenuated}.
\end{remark}

\section{Attenuated-space association schemes}\label{sec:attenuated}

The last two sections isolated an intersection obstruction and a rank-defect
obstruction.  Attenuated spaces combine them.  Since these spaces are less
familiar than Grassmann or polar spaces, we describe the model in some detail.
Let \(q\) be a prime power, let
\(n\geq1\) and \(\ell\geq0\), and fix \(m\) with \(1\leq m\leq n\).
Let \(V\) be an \((n+\ell)\)-dimensional vector space over \(\F_q\), and
fix an \(\ell\)-dimensional subspace \(E\leq V\).  The attenuated space 
associated to \((V,E)\) consists of all subspaces of \(V\) that meet \(E\) 
trivially.  Put
\[
        \Omega_m=
        \{X\leq V:\dim X=m,\ X\cap E=0\}.
\]
The set \(\Omega_m\) carries a symmetric association scheme, usually 
called the association scheme based on an attenuated space.  Wang, Guo, 
and Li introduced these schemes and computed their intersection numbers, 
and later work computed character tables and automorphism groups 
\cite{WangGuoLiAttenuated,Kurihara,LiuWangAttenuated}.  Guo, Li, and Wang used 
incidence matrices in finite attenuated spaces to obtain upper bounds for 
class dimension \cite{GuoLiWang}.  Our lower bound is the entropy 
counterpart to that incidence construction.

Choose a complement \(W\) to \(E\), so that \(V=W\oplus E\) and 
\(W\cong\F_q^n\).  Let \(\pi:V\to W\) be projection along \(E\).  If 
\(X\in\Omega_m\), then \(\pi|_X\) is injective because \(X\cap E=0\), and 
hence \(\pi(X)\) is an \(m\)-subspace of \(W\).  Conversely, once an 
\(m\)-subspace \(U\leq W\) is fixed, every linear map \(f:U\to E\) gives 
an element of \(\Omega_m\) through its graph
\[
        \operatorname{graph}(f)=\{u+f(u):u\in U\}\leq W\oplus E.
\]
Thus an attenuated \(m\)-subspace is equivalently the data
\[
        (U,f),
        \qquad
        U\in\Gr_q(n,m),\quad f\in\Hom(U,E).
\]
The fiber over \(U\) is
\[
        F_U=\{\operatorname{graph}(f): f\in\Hom(U,E)\},
        \qquad |F_U|=q^{m\ell},
\]
and therefore
\[
        |\Omega_m|=q^{m\ell}\qbinom nm.
\]

The relation between two elements is transparent in the graph-of-a-map
model.  Write
\[
        X=\operatorname{graph}(f:U\to E),
        \qquad
        Y=\operatorname{graph}(h:T\to E).
\]
It is determined by \(\dim(U\cap T)\) and \(\dim(X\cap Y)\).  In the
rank-defect indexing used here, the relation \(R_{(i,j)}\) is characterized
by
\[
        \dim(U\cap T)=m-i,
        \qquad
        \dim(X\cap Y)=m-i-j.
\]
The valid indices satisfy
\[
        0\leq i\leq\min\{m,n-m\},
        \qquad
        0\leq j\leq\min\{m-i,\ell\}.
\]
We will not need the intersection numbers themselves, but we do need to 
see
how the maps affect the second dimension.  Put \(A=U\cap T\).  Then
\[
        X\cap Y
        =
        \{u+f(u):u\in A,\ f(u)=h(u)\},
\]
and rank--nullity gives
\[
        \dim(X\cap Y)
        =
        \dim A-\rank\bigl((f|_A)-(h|_A)\bigr).
\]

Recall that for \(s,X\in\Omega_m\), we write \(i(s, X)\) for the relation 
index of the pair \((s,X)\). If \(X\) is random, then \(i(s,X)\) is the 
response random variable contributed by the landmark \(s\). What matters 
for our proof is that the relation index is determined by \(\dim(U \cap 
T)\) and \(\rank((f|_A)-(h|_A))\).

The notation used below can be summarized as follows.
\begin{center}
\begin{tabular}{ll}
\toprule
Notation & Meaning \\
\midrule
$U,T$ & the projected $m$-subspaces \\
$f,h$ & the maps whose graphs are $X,Y$ \\
$A=U\cap T$ & the common domain \\
$r=\dim A$ & the dimension of that domain \\
$\rank((f|_A)-(h|_A))$ & the rank part of the relation \\
\bottomrule
\end{tabular}
\end{center}

The two boundary cases are worth recording.  If \(\ell=0\), then \(E=0\), 
every fiber has one point, and \(\Omega_m=\Gr_q(n,m)\).  The attenuated 
scheme is the Grassmann scheme.  If \(n=m\), then every projected subspace 
\(U\) equals \(W\), so \(\Omega_n\) identifies with \(\Hom(W,E)\).  After
choosing bases, its elements are \(\ell\times n\) matrices, and the relation
between two elements is the rank of their difference.  Thus this boundary
is \(H_q(\ell,n)\) when \(\ell\geq n\), or equivalently \(H_q(n,\ell)\)
after transposition when \(\ell<n\), and the fiber theorem below recovers
its entropy lower bound.  It does not
recover the Grassmann lower bound because for 
\(\ell=0\) the fibers have entropy zero, so the fiber inequality becomes 
vacuous.  This is why the separate Grassmann section is mathematically 
useful rather than merely expository.

For \(0\leq r\leq m\), write
\[
        \mathcal H_r=H(\rank M_{\ell\times r}),
\]
where \(M_{\ell\times r}\) is a uniform \(\ell\times r\) matrix over 
\(\F_q\), and put \(\mathcal H_0=0\).  For a fixed \(m\)-subspace \(T\leq 
W\) and a random \(U\in\Gr_q(n,m)\), define
\[
        p_r=\Prob(\dim(U\cap T)=r).
\]
The standard finite-field subspace count gives, for fixed 
\(T\in\Gr_q(n,m)\) and random \(U\in\Gr_q(n,m)\),
\[
        p_r=
        \frac{q^{(m-r)^2}\qbinom mr\qbinom{n-m}{m-r}}{\qbinom nm}.
\]
Indeed, the numerator is the number of \(m\)-subspaces \(U\leq W\) 
satisfying \(\dim(U\cap T)=r\); see \cite[Corollary~A.2]{Kurihara}.

\begin{theorem}\label{thm:attenuated-fiber}
If \(\sum_{r=1}^m p_r\mathcal H_r>0\), then the attenuated-space 
association scheme on \(\Omega_m\) satisfies
\[
        \cdim(\Omega_m)
        \geq
        \frac{m\ell\log q}{\sum_{r=1}^m p_r\mathcal H_r}.
\]
\end{theorem}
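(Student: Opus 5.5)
The plan is to apply the localized entropy inequality to a single fiber. Fix an \(m\)-subspace \(U_0\leq W\) and let \(X\) be uniform on the fiber \(F_{U_0}\), so that \(X=\operatorname{graph}(f)\) with \(f\) uniform on \(\Hom(U_0,E)\). Then \(H(X)=\log|F_{U_0}|=m\ell\log q\). If \(S\) is class-resolving, its relation code is injective on all of \(\Omega_m\), hence on \(F_{U_0}\), and we get
\[
        m\ell\log q\leq\sum_{s\in S}H(i(s,X)).
\]
It then suffices to show that \(H(i(s,X))\leq\sum_{r=1}^m p_r\mathcal H_r\) for every \(s\in S\), at least after averaging over the choice of fiber. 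Since an arbitrary landmark need not satisfy the bound for a fixed fiber, I will average. Let \(U_0\) be uniform on \(\Gr_q(n,m)\) and, conditionally, let \(f\) be uniform on \(\Hom(U_0,E)\). For each value \(U_0=U\), the localized inequality on \(F_U\) gives
\[
        m\ell\log q\leq\sum_s H(i(s,X)\mid U_0=U).
\]
Averaging over \(U\) yields \(m\ell\log q\leq\sum_s H(i(s,X)\mid U_0)\), and this is the form I will bound.

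Next I compute the conditional entropy for a single landmark \(s=\operatorname{graph}(h:T\to E)\). Condition on \(U_0=U\) and put \(A=U\cap T\) with \(r=\dim A\). By the preceding discussion, the relation index \(i(s,X)\) is determined by \(\dim(U\cap T)\), which is fixed once \(U\) is fixed, together with \(\rank(f|_A-h|_A)\). The restriction map \(\Hom(U,E)\to\Hom(A,E)\) is surjective with fibers of equal size, so \(f|_A\) is uniform on \(\Hom(A,E)\). Translating by \(h|_A\), the difference \(f|_A-h|_A\) is again uniform. After choosing bases it is a uniform \(\ell\times r\) matrix, so
\[
        H(i(s,X)\mid U_0=U)\leq\mathcal H_r,
\]
with \(\mathcal H_0=0\) when \(r=0\); in fact this is an equality. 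Averaging over \(U\), \(\dim(U\cap T)=r\) occurs with probability \(p_r\), which depends only on \(T\) and not on \(h\). Hence
\[
        H(i(s,X)\mid U_0)=\sum_{r=1}^m p_r\mathcal H_r
\]
for every landmark \(s\).

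Summing over \(s\in S\) gives \(m\ell\log q\leq|S|\sum_r p_r\mathcal H_r\). Dividing by the positive denominator and minimizing over \(S\) yields the claimed bound. The step that needs the most care is the averaging. I cannot use a fixed fiber with an arbitrary landmark, because a landmark whose \(T\) equals \(U_0\) sees a full \(\mathcal H_m\). Using conditional entropy \(H(\cdot\mid U_0)\) together with the localized inequality on each fiber avoids this, and it makes the weights \(p_r\) appear exactly.
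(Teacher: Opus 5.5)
Your proposal is correct and matches the paper's proof. The paper applies the entropy principle on each fiber \(F_U\) to get \(m\ell\log q\leq\sum_{s\in S}\mathcal H_{\dim(U\cap\pi(s))}\), then sums over all \(U\in\Gr_q(n,m)\) and interchanges the sums; your conditional entropy \(H(\,\cdot\mid U_0)\) with \(U_0\) uniform is the same averaging written probabilistically.

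One small correction: what you need is that \(p_r\) is independent of \(T\), which follows from transitivity of \(\operatorname{GL}_n(q)\) on \(\Gr_q(n,m)\), as the paper notes. Saying only that it ``depends only on \(T\)'' is not enough, since otherwise the same expression would not hold for every landmark.
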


For \(\ell\geq1\), the denominator is automatically positive: \(p_m>0\),
and the rank of a uniform \(\ell\times m\) matrix is not deterministic.
The positivity condition only excludes the degenerate boundary \(\ell=0\).

\begin{proof}
Let \(S\) be class-resolving, and fix a base subspace \(U\in\Gr_q(n,m)\).  
Since \(S\)
resolves all of \(\Omega_m\), its response map is injective on the fiber
\(F_U\).  For \(s\in S\), write \(T=\pi(s)\) for its base subspace and 
write
\(s=\operatorname{graph}(h:T\to E)\).  Set \(A=U\cap T\) and \(r=\dim A\).

If \(X\in F_U\), then \(X=\operatorname{graph}(f:U\to E)\) for a unique
\(f\in\Hom(U,E)\).  As previously discussed we have
\[
        \dim(s\cap X)
        =
        r-\rank\bigl((f|_A)-(h|_A)\bigr).
\]
With \(U\), \(T\), and \(h\) fixed, the first part of the relation index 
is
fixed by \(r=\dim(U\cap T)\), and the second part is therefore in 
bijection with
\[
        \rank\bigl((f|_A)-(h|_A)\bigr).
\]

Now choose \(X\) uniformly from \(F_U\).  Equivalently, \(f\) is uniform 
in
\(\Hom(U,E)\).  The restriction map \(\Hom(U,E)\to\Hom(A,E)\) is 
surjective
with equal-sized fibers, so \(f|_A\) is uniform in \(\Hom(A,E)\).  
Translating by
the fixed map \(-h|_A\) preserves uniformity.  Hence
\((f|_A)-(h|_A)\) is a uniform element of \(\Hom(A,E)\), whose rank has 
the same
distribution as the rank of a uniform \(\ell\times r\) matrix over 
\(\F_q\).
Therefore the response entropy of \(s\) on \(F_U\) is
\[
        H(i(s,X))
        =
        \mathcal H_r
        =
        \mathcal H_{\dim(U\cap \pi(s))}.
\]
This includes the case \(r=0\), where the response is constant and
\(\mathcal H_0=0\).

Applying the entropy principle inside the fiber gives
\[
        m\ell\log q=\log |F_U|
        \leq
        \sum_{s\in S} \mathcal H_{\dim(U\cap \pi(s))}.
\]
Summing over all base subspaces \(U\in\Gr_q(n,m)\) and interchanging sums 
gives  
\[
        \qbinom{n}{m}m\ell\log q
        \leq
        \sum_{U\in\Gr_q(n,m)}
        \sum_{s\in S}
        \mathcal H_{\dim(U\cap \pi(s))}
        =
        \sum_{s\in S}
        \sum_{U\in\Gr_q(n,m)}
        \mathcal H_{\dim(U\cap \pi(s))}.
\]
The group \(\operatorname{GL}_n(q)\) is transitive on \(\Gr_q(n,m)\), so 
the inner sum is independent of the particular base subspace \(\pi(s)\).  
For any fixed \(T\in\Gr_q(n,m)\),
\[
        \sum_{U\in\Gr_q(n,m)}
        \mathcal H_{\dim(U\cap T)}
        =
        \qbinom{n}{m}
        \sum_{r=1}^m p_r\mathcal H_r,
\]
where the \(r=0\) term vanishes because \(\mathcal H_0=0\).  Hence
\[
        \qbinom{n}{m}m\ell\log q
        \leq
        |S|\qbinom{n}{m}
        \sum_{r=1}^m p_r\mathcal H_r.
\]
Canceling \(\qbinom{n}{m}\) and using the assumption that the denominator 
is positive proves the theorem.
\end{proof}

\begin{corollary}\label{cor:attenuated-fixedrank}
Fix \(q\) and \(m\geq1\).  Along any joint limit with
\(n\to\infty\) and \(\ell\to\infty\),
\[
        \cdim(\Omega_m)
        \geq
        \left(\frac{m(q-1)}{(q^m-1)^2}+o(1)\right)q^{n+\ell}.
\]
Together with the incidence-rank upper bound of Guo, Li, and Wang for \(m\geq2\)
and Proposition~\ref{obs:rank-one} for \(m=1\), this gives
\[
        \cdim(\Omega_m)=\Theta_{q,m}(q^{n+\ell}).
\]
\end{corollary}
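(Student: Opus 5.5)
The plan is to plug asymptotic estimates for \(p_r\) and \(\mathcal H_r\) into Theorem~\ref{thm:attenuated-fiber}; the \(\Theta\) statement then follows from the cited upper bounds. For \(\ell\ge1\) the denominator \(\sum_{r=1}^m p_r\mathcal H_r\) is positive, so the theorem applies and gives
\[
\cdim(\Omega_m)\ \ge\ \frac{m\ell\log q}{\sum_{r=1}^m p_r\mathcal H_r}.
\]
The expected answer comes from the \(r=1\) term. The intersection probability \(p_1\) is of order \(q^{-n}\), as in the Grassmann section. The rank entropy \(\mathcal H_1\) of a uniform \(\ell\times1\) matrix is of order \(\ell q^{-\ell}\log q\), as in the bilinear section. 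Their product, divided into \(m\ell\log q\), should give order \(q^{n+\ell}\) with the stated constant.

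First I estimate \(p_r\) for fixed \(m\) as \(n\to\infty\). From the formula
\[
p_r=\frac{q^{(m-r)^2}\qbinom mr\qbinom{n-m}{m-r}}{\qbinom nm}
\]
and the asymptotics \(\qbinom ab\sim c\,q^{b(a-b)}\), the exponent of \(p_r\) is
\[
(m-r)^2+(m-r)(n-2m+r)-m(n-m)=-rn+O_m(1).
\]
So \(p_r=O_{q,m}(q^{-rn})\) for every \(r\ge1\). For \(r=1\) I want the sharp value. The union bound from Theorem~\ref{thm:grassmann} gives
\[
p_1\le \Prob(U\cap T\neq0)\le \qbinom m1\frac{q^m-1}{q^n-1}=\left(\frac{(q^m-1)^2}{q-1}+o(1)\right)q^{-n}.
\]
An upper bound on \(p_1\) is all the lower bound needs, so there is no need to check that this is asymptotically exact.

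Next I bound \(\mathcal H_r\) as \(\ell\to\infty\). A uniform \(\ell\times r\) matrix is rank-deficient with probability
\[
\left(\frac{q^r-1}{q-1}+o(1)\right)q^{-\ell},
\]
by the computation in Theorem~\ref{thm:bilinear} with \(n\) replaced by \(\ell\). Lemma~\ref{lem:default} then gives, for \(r=1\),
\[
\mathcal H_1\le(1+o(1))\,\ell q^{-\ell}\log q .
\]
For general \(r\le m\) it gives the crude bound \(\mathcal H_r=O_{q,m}(\ell q^{-\ell})\); in fact \(\mathcal H_r\le\log(m+1)\) would already suffice.

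Combining the two estimates,
\[
\sum_{r\ge1}p_r\mathcal H_r\le\left(\frac{(q^m-1)^2}{q-1}+o(1)\right)q^{-n}\,\ell q^{-\ell}\log q\;+\;O\bigl(q^{-2n}\bigr)\cdot O(1).
\]
The main obstacle is checking that the \(r\ge2\) terms are \(o(q^{-n-\ell}\ell)\) along every joint limit, including regimes where \(\ell\) is much larger than \(n\). There the crude bound \(\mathcal H_r\le\log(m+1)\) is not enough, since \(q^{-2n}\) need not be small compared with \(q^{-n-\ell}\). I will instead use the \(r\)-dependent bound \(\mathcal H_r=O(\ell q^{-\ell})\), which makes those terms \(O(q^{-2n}\ell q^{-\ell})=o(q^{-n}\ell q^{-\ell})\) as \(n\to\infty\), uniformly in \(\ell\). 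Dividing \(m\ell\log q\) by this estimate yields the claimed constant \(m(q-1)/(q^m-1)^2\). The \(\Theta\) statement then follows from the Guo--Li--Wang upper bound \(q^\ell\qbinom n1=O(q^{n+\ell})\) for \(m\ge2\), and from Proposition~\ref{obs:rank-one} for \(m=1\).
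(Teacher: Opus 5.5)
Your proposal is correct and follows essentially the same route as the paper. You feed Theorem~\ref{thm:attenuated-fiber} the estimates $p_1\le\bigl(\frac{(q^m-1)^2}{q-1}+o(1)\bigr)q^{-n}$, $\sum_{r\ge2}p_r=O_{q,m}(q^{-2n})$, $\mathcal H_1=(1+o(1))\ell q^{-\ell}\log q$ and the uniform bound $\mathcal H_r=O_{q,m}(\ell q^{-\ell})$, and then cite the Guo--Li--Wang bound and Proposition~\ref{obs:rank-one} for the upper side. The one slip is your early aside that $\mathcal H_r\le\log(m+1)$ ``would already suffice,'' which fails when $\ell\gg n$; you correctly retract it and use the $r$-dependent bound, exactly as the paper does.
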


\begin{proof}
By Theorem~\ref{thm:attenuated-fiber}, it remains to estimate the averaged
fiber entropy \(D_{n,\ell}:=\sum_{r=1}^m p_r\mathcal H_r\).
We first estimate the Grassmann intersection probabilities \(p_r\).  In 
the
fixed-\(q,m\), \(n\to\infty\) regime, the dominant nonzero contribution 
comes
from the stratum \(\dim(U\cap T)=1\):
\[
        p_1=
        \left(\frac{(q^m-1)^2}{q-1}+o(1)\right)q^{-n}.
\]
The remaining strata are lower order.  Indeed, if \(\dim(U\cap T)\geq 2\), 
then
\(U\cap T\) contains a two-dimensional subspace \(M\leq T\).  A union 
bound over
the \(\qbinom m2\) choices of \(M\) gives
\[
        \sum_{r=2}^m p_r
        \leq
        \qbinom m2
        \frac{\qbinom{n-2}{m-2}}{\qbinom nm}
        =
        O_{q,m}(q^{-2n}).
\]

We next estimate the rank entropies.  A uniform \(\ell\times 1\) matrix 
has rank
zero with probability \(q^{-\ell}\) and rank one otherwise, so
\[
        \mathcal H_1
        =
        H_2(q^{-\ell})
        =
        \ell q^{-\ell}\log q\,(1+o(1)).
\]
For \(1\leq r\leq m\), a uniform \(\ell\times r\) matrix has default rank 
\(r\).  Since \(r\leq m\) and \(m\) is fixed, we have \(\ell\geq r\) for
all sufficiently large \(\ell\),
and its probability of rank defect is \(O_{q,m}(q^{-\ell})\), as was shown 
in Theorem~\ref{thm:bilinear}.  Since the rank
defect probability is then at most \(1/2\), and the rank alphabet has size
at most \(m+1\), Lemma~\ref{lem:default} gives the 
uniform
bound
\[
        \mathcal H_r
        =
        O_{q,m}(\ell q^{-\ell}\log q).
\]
Therefore
\[
        D_{n,\ell}
        =
        p_1\mathcal H_1
        +
        \sum_{r=2}^m p_r\mathcal H_r
        =
        \left(\frac{(q^m-1)^2}{q-1}+o(1)\right)
        \ell q^{-n-\ell}\log q.
\]
Substituting this estimate into Theorem~\ref{thm:attenuated-fiber} gives
\[
        \cdim(\Omega_m)
        \geq
        \frac{m\ell\log q}{D_{n,\ell}}
        =
        \left(\frac{m(q-1)}{(q^m-1)^2}+o(1)\right)q^{n+\ell}.
\]
For \(2\leq m\leq n-1\), the incidence-rank construction of Guo, Li, and
Wang gives
\(\cdim(\Omega_m)\leq q^\ell\qbinom n1=O_q(q^{n+\ell})\).  For fixed
\(m\geq2\), this range holds for all sufficiently large \(n\).  When
\(m=1\), Proposition~\ref{obs:rank-one} gives the stronger exact bound
needed for the same conclusion.  Thus the stated
\(\Theta_{q,m}\)-bound follows.
\end{proof}

\begin{proposition}\label{obs:rank-one}
For all prime powers \(q\) and all \(n,\ell\geq 1\),
\[
        \cdim(\Omega_1)
        =
        (q^\ell-1)\frac{q^n-1}{q-1}.
\]
\end{proposition}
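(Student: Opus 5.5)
The plan is to make the scheme on \(\Omega_1\) completely explicit and then treat the lower and upper bounds as elementary counting. For \(m=1\), an element of \(\Omega_1\) is a pair \((U,f)\) with \(U\) a point of \(W\) and \(f\in\Hom(U,E)\). The index ranges are \(0\le i\le\min\{1,n-1\}\) and \(0\le j\le\min\{1-i,\ell\}\). With the rank-defect indexing and the formula \(\dim(X\cap Y)=\dim A-\rank((f|_A)-(h|_A))\), the relations are:
\begin{itemize}
\item \(R_{(0,0)}\): \(X=Y\);
\item \(R_{(0,1)}\): \(\pi(X)=\pi(Y)\) but \(X\neq Y\);
\item \(R_{(1,0)}\): \(\pi(X)\neq\pi(Y)\).
\end{itemize}
The last case occurs only when \(n\ge2\); if \(n=1\) there is a single fiber. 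So the scheme is the trivial "group divisible" scheme. The point set is partitioned into the \(N=\qbinom n1=(q^n-1)/(q-1)\) fibers \(F_U\), each of size \(q^\ell\ge q\ge2\). A landmark \(s\) reports only three things: whether \(X=s\), whether \(X\) lies in the same fiber as \(s\), or neither.

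\textbf{Lower bound.} Let \(S\) be class-resolving. Take two distinct elements \(x,y\) of the same fiber \(F_U\), both outside \(S\). Every \(s\in S\cap F_U\) sees both in \(R_{(0,1)}\), and every \(s\notin F_U\) sees both in \(R_{(1,0)}\). Hence their codes coincide, a contradiction. So each fiber contains at most one non-landmark, and
\[
        |S|\ge N(q^\ell-1).
\]

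\textbf{Upper bound.} Let \(S\) consist of all but one element \(x_U\) from each fiber \(F_U\). Landmarks are identified by their unique \(R_{(0,0)}\) coordinate. A non-landmark \(x_U\) has no \(R_{(0,0)}\) coordinate. Its \(R_{(0,1)}\) coordinates are exactly the landmarks in \(F_U\), and this set is nonempty because \(q^\ell-1\ge1\). Distinct fibers give disjoint such sets, so the codes of the \(x_U\) are pairwise distinct. They also differ from the landmark codes, since those contain a \(0\)-coordinate. Hence \(S\) is class-resolving of size \((q^\ell-1)N\).

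The only step needing care is the first one: reading off from the paper's \((i,j)\) indexing that for \(m=1\) the relation sees nothing beyond "same point / same fiber / different fiber". In particular, the map data \(f\) must be invisible across fibers, because \(A=U\cap T=0\) there. Once that is checked, both bounds are immediate. Note that \(\ell\ge1\) is used exactly to guarantee that every fiber retains a landmark in the upper construction.
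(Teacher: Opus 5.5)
Your proof is correct and follows the paper's argument. Both identify the three relations for \(m=1\) (same element, same fiber, different fiber), note that two non-landmarks in one fiber have identical codes, and take all but one element of each fiber. The only difference is that you verify both bounds directly and uniformly in \(n\). The paper instead treats \(n=1\) separately and, for \(n\geq2\), cites the complete multipartite formula \(\mu(K_{m_1,\ldots,m_r})=\sum_i(m_i-1)\).
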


\begin{proof}
If \(n=1\), there is a single fiber of size \(q^\ell\).  Any two elements
outside a proposed class-resolving set have identical relation vectors, so
at most one element may be omitted.  Conversely, all but one element form
a class-resolving set.  Hence
\[
        \cdim(\Omega_1)=q^\ell-1,
\]
which is the stated formula when \(\qbinom11=1\).

Now assume \(n\geq2\).  For \(m=1\), the attenuated association scheme has
exactly three relation classes: the diagonal relation, the non-diagonal
same-fiber relation, and the different-fiber relation.  Equivalently, after
relabelling the two nontrivial relations, its relation code is the distance
code of the complete multipartite graph with parts \(F_U\),
\(U\in\Gr_q(W,1)\).  There are at least two fibers, and the corresponding
graph is
\[
        K_{\underbrace{q^\ell,\ldots,q^\ell}_{\qbinom n1\text{ parts}}}.
\]
Indeed, a resolving set can omit at most one vertex from each part, and
omitting one vertex from every part works.  This gives the standard formula
\[
        \mu(K_{m_1,\ldots,m_r})=\sum_{i=1}^r (m_i-1)
\]
for complete multipartite graphs 
\cite[Proposition~1]{BaileySmallDistanceRegular}.
Applying it with \(r=\qbinom n1\) and \(m_i=q^\ell\) gives
\[
        \cdim(\Omega_1)
        =
        \mu(K_{q^\ell,\ldots,q^\ell})
        =
        \sum_{U\in \Gr_q(W,1)} (q^\ell-1)
        =
        (q^\ell-1)\qbinom n1.
\]
\end{proof}

Proposition~\ref{obs:rank-one} demonstrates that the lower-bound constant
in Corollary~\ref{cor:attenuated-fixedrank} is asymptotically sharp in rank
one, including its leading constant.  Thus the entropy obstruction captures
the true asymptotic class dimension here, not only its order of magnitude.

\section{Dual polar graphs}\label{sec:dualpolar}

Bailey and Spiga obtained a general upper bound for dual polar graphs from
the real rank of the point--generator incidence matrix, and concluded by
asking for lower bounds in arbitrary diameter \cite{BaileySpiga}.  In this 
section we determine the correct order in the fixed-rank regime.  The 
lower bound uses opposition as the default distance, and the upper bound 
uses the line geometry of the dual polar space.

Here is the argument in brief.  Opposition is the typical distance, so one
landmark carries little information and the entropy bound forces many
landmarks.  For the upper bound, one random generator distinguishes a pair
too rarely.  A whole pencil is much less likely to fail, so a constant
number of independently chosen pencils resolves every pair.

\begin{theorem}[Main dual-polar theorem]\label{thm:dual-main}
Fix $d\geq2$ and a classical polar-space type with parameter $e>0$.  Put
\[
        A=de+\binom d2,\qquad
        \alpha=\min\{1,e\},\qquad
        k_{d,e}=\left\lfloor\frac{2A}{\alpha}\right\rfloor+1.
\]
As $q\to\infty$ through the admissible field orders for that type,
\[
        \left(\frac{A}{e}+o(1)\right)q^e
        \leq
        \mu(\Gamma(q,d,e))
        \leq
        k_{d,e}(q^e+1).
\]
Consequently,
\[
        \mu(\Gamma(q,d,e))=\Theta_{d,e}(q^e).
\]
\end{theorem}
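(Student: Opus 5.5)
For the lower bound I will apply Theorem~\ref{thm:vt-entropy} together with Lemma~\ref{lem:default}, exactly as in the Grassmann and bilinear sections. The isometry group of the polar space is transitive on generators, so $\Gamma(q,d,e)$ is vertex-transitive. The standard counts give
\[
|V|=\prod_{i=0}^{d-1}(q^{i+e}+1),\qquad k_i=q^{\binom i2+ie}\qbinom di
\]
for the number of generators at distance $i$ from a fixed generator $s_0$. In particular $k_d=q^{\binom d2+de}=q^{A}$, so $\log|V|=A\log q\,(1+o(1))$. The exceptional event for the default distance $d$ (opposition) then has probability
\[
\eps_q=1-\frac{q^{A}}{\prod_{i=0}^{d-1}(q^{i+e}+1)}
=1-\prod_{i=0}^{d-1}\bigl(1+q^{-i-e}\bigr)^{-1}
=(1+o(1))\,q^{-e}.
\]
Here the $i=0$ factor dominates, and equivalently $k_{d-1}\sim q^{A-e}$. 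The alphabet $\{0,\dots,d\}$ is fixed and $\eps_q\le1/2$ for large $q$. Lemma~\ref{lem:default} with $\delta=\eps_q$ then gives $H(d(s_0,X))\le(1+o(1))\,e\,q^{-e}\log q$. Dividing gives $\mu\ge(A/e+o(1))q^e$.

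For the upper bound I will use the near-polygon structure of the dual polar space. Its lines are the pencils $L_P=\{Z\text{ generator}:Z\supseteq P\}$ for singular $(d-1)$-spaces (panels) $P$, and each has $q^e+1$ points. In a near polygon every generator $X$ has a unique nearest point $\operatorname{proj}_{L_P}X$ on $L_P$, and every other point of $L_P$ lies at distance $d(X,L_P)+1$. Hence the distance vector of $X$ on $L_P$ is determined by the pair $(\operatorname{proj}_{L_P}X,\ d(X,L_P))$ and determines it. Therefore $L_P$ fails to resolve $X\ne Y$ only if $\operatorname{proj}_{L_P}X=\operatorname{proj}_{L_P}Y$ and $d(X,L_P)=d(Y,L_P)$.

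Take $S=L_{P_1}\cup\dots\cup L_{P_k}$ with $P_1,\dots,P_k$ independent uniform panels and $k=k_{d,e}$, so that $|S|\le k(q^e+1)$. The key estimate is that for every pair $X\ne Y$,
\[
\Pr_P\bigl(L_P\text{ fails on }\{X,Y\}\bigr)\le C_{d,e}\,q^{-\alpha}.
\]
Given this, a union bound over at most $|V|^2=O(q^{2A})$ pairs bounds the failure probability of $S$ by $O\bigl(q^{2A}\cdot C^k q^{-\alpha k}\bigr)\to0$, because $\alpha k>2A$. So some choice of panels resolves $\Gamma$ for large $q$, and the $\Theta$ statement follows.

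The main obstacle is the key estimate. I plan to condition on $j=d(X,Y)\in\{1,\dots,d\}$ and use transitivity of the group on pairs at distance $j$, so that $X,Y$ may be fixed. I will then split the panels into two classes.

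\emph{Degenerate panels.} These are panels $P$ with $\dim(P^\perp\cap X)\ge2$ or $\dim(P^\perp\cap Y)\ge2$, or, more crudely, panels not in general position with respect to $X$ or $Y$. Counting via the standard transitivity of the group on panels at a given position relative to a generator, their proportion should be $O(q^{-1})$.

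\emph{Generic panels.} For these, $P^\perp\cap X=\langle x\rangle$ and $P^\perp\cap Y=\langle y\rangle$ are points, and $\operatorname{proj}_{L_P}X=P+\langle x\rangle$, $\operatorname{proj}_{L_P}Y=P+\langle y\rangle$. A collision forces $\langle P,x,y\rangle$ to be totally singular. I expect this to cost a factor of order $q^{-e}$: heuristically, the two projections are nearly independent among the $q^e+1$ points of $L_P$. I would try to make this rigorous by first fixing $x\in X$ and then counting the panels $P\le x^\perp$ for which the second projection lands on the same line point.

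Combining the two classes gives the bound $q^{-\min\{1,e\}}$, which is why $\alpha=\min\{1,e\}$ appears in $k_{d,e}$.
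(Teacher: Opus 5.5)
Your lower bound matches the paper's argument. The scaffolding of your upper bound also matches: independent uniform panels, the union of their pencils, the gate characterization of a failing pencil, and a union bound over $O(q^{2A})$ pairs using $k\alpha>2A$.

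The gap is the key estimate $\Pr_P(L_P\text{ fails on }\{X,Y\})\le C_{d,e}q^{-\alpha}$. The whole upper bound rests on it, you leave it as a heuristic, and that heuristic is false for your generic class. For generic $P$ the gates are $P+\langle x\rangle$ and $P+\langle y\rangle$. Whenever $x\in X\cap Y$, you get $y=x$ automatically, and the pencil fails at no cost. This is not a $q^{-e}$ event:
\begin{itemize}
\item The stabilizer of $X$ is transitive on $\Pts(X)$, and $P\mapsto P^\perp\cap X$ is equivariant.
\item So among panels avoiding $X$, the point $x$ is uniform on $\Pts(X)$ and lies in $X\cap Y$ with probability $\qbinom{d-j}{1}/\qbinom{d}{1}$, where $j=d_\Gamma(X,Y)$.
\item For adjacent $X,Y$ this is $\asymp q^{-1}$, far larger than $q^{-e}$ for the types with $e=3/2$ or $e=2$.
\end{itemize}
Concretely, in $Q^-(5,q)$ every point collinear with $z=X\cap Y$ but off $X\cup Y$ is a bad panel, and these make up a $\asymp q^{-1}$ fraction of all points. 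So the two projections are far from independent, and the bound $O(q^{-e})$ you plan to prove for generic panels is false.

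Your account of where $\alpha$ comes from is also off. Since singular vectors of $X^\perp$ lie in $X$, one has $\dim(P^\perp\cap X)=1+\dim(P\cap X)$. So your degenerate panels are exactly those meeting $X$ or $Y$, a proportion $O(q^{-e-1})$. The $q^{-1}$ in $\alpha=\min\{1,e\}$ comes entirely from the common-point effect above.

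The estimate you need is still true. You can repair your route by splitting off the event $x\in X\cap Y$, which has probability $O(q^{-1})$. In the remaining case $x\notin Y$, failure means the gate $P+\langle x\rangle$ meets $Y$. Passing to the residue at $x$ makes your $q^{-e}$ heuristic rigorous there.

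The paper avoids this casework entirely:
\begin{itemize}
\item By the weak gate lemma, some $W\in\mathcal C(H)$ meets $U$, so $d_\Gamma(W,U)<d$.
\item If $H$ is bad, then $d_\Gamma(W,V)=d_\Gamma(W,U)<d$, so $W$ meets $V$ too.
\item Hence there are at most $\qbinom d1$ bad panels per generator meeting both $U$ and $V$. Double counting then bounds the bad fraction by $(q^e+1)\Pr_Z(Z\cap U\ne0,\ Z\cap V\ne0)$.
\item Lemma~\ref{lem:dual-both} bounds this probability by a union bound over points of $U\cap V$ (the $q^{-e-1}$ term, exactly your missing case) and over collinear pairs in $\Pts(U)\times\Pts(V)$ (the $q^{-2e}$ term).
\end{itemize}
This gives $O(q^{-1}+q^{-e})$ uniformly in the pair, without conditioning on $j$, without transitivity on pairs, and without a genericity split.
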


\subsection{Polar spaces, generators, and distance}

A polar space is obtained by first equipping the ambient vector space with 
a
form and then retaining only the subspaces that are compatible with that
form.  We recall the resulting geometry before turning to metric 
dimension;
see \cite[Sec.~4]{BishnoiFiniteGeometry} for an introductory account and
\cite[Ch.~9]{BCN} and \cite[Sec.~3.1]{VanDamKoolenTanaka} for the 
classical
families and their associated distance-regular graphs.

Let $V$ be a finite-dimensional vector space over a finite field.  The
classical polar spaces considered here arise from one of the following:
\begin{itemize}[leftmargin=2em]
\item a nondegenerate alternating or Hermitian form $\beta$, in which case 
a
      subspace $X\leq V$ is \emph{totally isotropic} when
      $\beta(x,y)=0$ for all $x,y\in X$;
\item a nondegenerate quadratic form $Q$, in which case $X$ is
      \emph{totally singular} when $Q(x)=0$ for every $x\in X$.
\end{itemize}
We use \emph{singular} for either condition.  The points and lines of the
polar space are the singular subspaces of vector dimensions one and two,
with incidence given by containment.  For a singular point $p$, the 
notation
$p^\perp$ refers to the hyperplane orthogonal to $p$ under $\beta$, or 
under
the polar form associated with $Q$.
For a subspace $X$, write $\Pts(X)$ for its one-dimensional subspaces.

The \emph{rank} $d$ is the maximum vector-space dimension of a singular
subspace.  Witt decomposition implies that every maximal singular subspace
has dimension $d$; these maximal subspaces are called \emph{generators}
\cite{TaylorClassicalGroups}.
Thus a generator has projective dimension $d-1$, although throughout this
paper we use vector-space dimensions.  The adjective ``dual'' in dual 
polar
graph reflects the fact that the vertices are these maximal objects rather
than the points of the polar space.

We let $q$ denote the order of the ambient field.  Consequently, in the
Hermitian cases $q$ is required to be a square, so the displayed powers
$q^{1/2}$ and $q^{3/2}$ are integers.  The classical families are 
conveniently packaged by a parameter $e$, and we note that every singular 
$(d-1)$-subspace is contained in $q^e+1$ generators.  We call such a 
subspace a \emph{panel}, and call the $q^e+1$ generators through it its 
\emph{pencil}.  Equivalently, a rank-one residue has $q^e+1$ generators.  
The relevant types are listed below.

\begin{center}
\begin{tabular}{lll}
\toprule
Polar space & Defining form & Parameter $e$ \\
\midrule
$W(2d-1,q)$ & alternating on $\F_q^{2d}$ & $1$ \\
$Q(2d,q)$ & parabolic quadratic on $\F_q^{2d+1}$ & $1$ \\
$Q^+(2d-1,q)$ & hyperbolic quadratic on $\F_q^{2d}$ & $0$ \\
$Q^-(2d+1,q)$ & elliptic quadratic on $\F_q^{2d+2}$ & $2$ \\
$H(2d-1,q)$, $q$ square & Hermitian on $\F_q^{2d}$ & $1/2$ \\
$H(2d,q)$, $q$ square & Hermitian on $\F_q^{2d+1}$ & $3/2$ \\
\bottomrule
\end{tabular}
\end{center}

Let $\mathcal P$ be one of these polar spaces and let $\mathcal G_d$ 
denote
its set of generators.  The dual polar graph $\Gamma(q,d,e)$ has vertex 
set
$\mathcal G_d$, with two generators adjacent when they meet in a singular
subspace of codimension one.  More generally, the graph distance is
\begin{equation}\label{eq:dual-distance}
        d_\Gamma(X,Y)=d-\dim(X\cap Y).
\end{equation}
This is the standard distance formula for dual polar graphs
\cite[Sec.~9.4.1]{BCN}.  One edge can change the dimension of the
intersection with a fixed generator by at most one, while the extension
property of polar spaces allows one to replace the missing dimensions one
at a time.  The diameter is
therefore $d$.  Two generators at distance $d$ are disjoint and are said 
to
be \emph{opposite}.

In rank two, the polar space is a classical generalized quadrangle.  Its
generators are the singular lines, two vertices of the dual polar graph 
are
adjacent when the corresponding lines meet in a point, and opposite 
vertices
correspond to disjoint lines.  Higher rank is the same intersection 
geometry
with generators of dimension $d$.

The parameter $e$ controls all counts needed below.  Write
\[
        \qbinom r1=\frac{q^r-1}{q-1}.
\]
If $U$ is a fixed generator and $Y$ is at distance $i$ from $U$, the
intersection numbers of the dual polar graph are as follows
\cite[Sec.~9.4.1]{BCN}:
\begin{equation}\label{eq:dual-intersection-numbers}
        c_i=\qbinom i1
        \qquad (1\leq i\leq d),
        \qquad
        b_i=q^{e+i}\qbinom{d-i}{1}
        \qquad (0\leq i\leq d-1),
\end{equation}
where $c_i$ counts neighbors of $Y$ one step closer to $U$ and $b_i$ 
counts
neighbors one step farther away.  These formulas come from counting the
relevant hyperplanes of $Y$ and then extending them to generators in the
corresponding residues.  If
\[
        k_i=|\{Y\in\mathcal G_d:d_\Gamma(U,Y)=i\}|,
\]
then distance-regularity gives $k_0=1$ and
\[
        k_{i+1}=k_i\frac{b_i}{c_{i+1}}.
\]
Substituting \eqref{eq:dual-intersection-numbers} yields
\begin{equation}\label{eq:dual-sphere-size}
        k_i=q^{ie+\binom i2}\qbinom di,
        \qquad 0\leq i\leq d.
\end{equation}
The finite $q$-binomial theorem now gives the total number of generators:
\begin{equation}\label{eq:dual-generator-count}
        N_d:=|\mathcal G_d|
        =\sum_{i=0}^d k_i
        =\prod_{j=1}^d(q^{e+j-1}+1).
\end{equation}

The same product occurs in every residue.  If $R$ is a singular subspace 
of
vector dimension $d-t$, then the generators containing $R$ correspond to 
the
generators of the rank-$t$ polar space on $R^\perp/R$, which has the same
parameter $e$.  We therefore put
\begin{equation}\label{eq:dual-residue-count}
        N_t=\prod_{j=1}^t(q^{e+j-1}+1),
        \qquad N_0=1.
\end{equation}
In particular, a fixed singular point lies in $N_{d-1}$ generators and a
fixed singular line lies in $N_{d-2}$ generators.

The hyperbolic family $Q^+(2d-1,q)$ has $e=0$ and behaves differently in 
the
asymptotic regime considered here.  By \eqref{eq:dual-sphere-size} and
\eqref{eq:dual-generator-count}, the probability that two random 
generators
are opposite tends to $1/2$, rather than to $1$.  Thus maximum distance is
not an overwhelmingly likely response, and the default-distance argument
below does not produce a growing $q^e$ scale.  We therefore assume $e>0$ 
for
the remainder of the section.

\subsection{Opposition and the entropy lower bound}

For $d=1$, the graph is complete on $q^e+1$ vertices, and hence has metric
dimension $q^e$.  We henceforth take $d\geq2$.

Fix a generator $U$ and let $X$ be uniformly distributed on $\mathcal 
G_d$.
The maximum-distance response $d_\Gamma(U,X)=d$ is the event that $X$ is
opposite $U$.  From \eqref{eq:dual-sphere-size} and
\eqref{eq:dual-generator-count},
\begin{equation}\label{eq:dual-opposition-probability}
        \Prob(X\cap U=0)
        =\frac{k_d}{N_d}
        =\prod_{j=1}^d(1+q^{-(e+j-1)})^{-1}.
\end{equation}
The first factor in this product controls the failure of opposition.  In
particular, a fixed landmark has a non-default response on only about a
$q^{-e}$-fraction of all generators.

\begin{theorem}\label{thm:dual-lower}
For fixed $d\geq2$, fixed classical type with parameter $e>0$, and
$q\to\infty$ through the admissible prime powers for that type,
\[
        \mu(\Gamma(q,d,e))
        \geq
        \left(\frac{de+\binom d2}{e}+o(1)\right)q^e.
\]
\end{theorem}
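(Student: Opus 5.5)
We apply Theorem~\ref{thm:vt-entropy}, exactly as in the Grassmann and bilinear forms sections. The isometry group of the form acts transitively on generators by Witt's theorem, so $\Gamma(q,d,e)$ is vertex-transitive. It therefore suffices to estimate two quantities: the numerator $\log N_d$, and the denominator $H(d_\Gamma(U,X))$ for a fixed generator $U$ and a uniform generator $X$.

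For the numerator, \eqref{eq:dual-generator-count} gives
\[
\log N_d=\sum_{j=1}^d \log(q^{e+j-1}+1)=\Bigl(\sum_{j=1}^d (e+j-1)\Bigr)\log q+O(q^{-e})=\Bigl(de+\binom d2\Bigr)\log q+o(1).
\]

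For the denominator, take the default response to be $d$ (opposition). By \eqref{eq:dual-opposition-probability},
\[
\eps_q:=\Prob(X\cap U\neq0)=1-\prod_{j=1}^d(1+q^{-(e+j-1)})^{-1}.
\]
For $j\geq2$ the exponents satisfy $e+j-1\geq e+1>e$. Expanding the finite product therefore gives $\eps_q=q^{-e}+O_{d,e}(q^{-e-\min\{1,e\}})=(1+o(1))q^{-e}$, since the cross terms are $O(q^{-2e})$ and the $j\geq2$ terms are $O(q^{-e-1})$. Set $\delta_q=\eps_q$. Then $\delta_q\leq 1/2$ for large $q$, and $\log(1/\delta_q)=e\log q+o(1)$. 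The response alphabet $\{0,\dots,d\}$ has fixed size $d+1$, so Lemma~\ref{lem:default} gives
\[
H(d_\Gamma(U,X))\leq \delta_q\log(1/\delta_q)+O_d(\delta_q)=(1+o(1))\,e\,q^{-e}\log q .
\]
The $O_d(\delta_q)$ term is absorbed into the $o(1)$ because $\log q\to\infty$ as $q\to\infty$. This differs from the earlier sections, where $n\to\infty$ played that role.

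Dividing the two estimates,
\[
\mu(\Gamma(q,d,e))\geq \frac{(de+\binom d2)\log q\,(1+o(1))}{(1+o(1))\,e\,q^{-e}\log q}=\Bigl(\frac{de+\binom d2}{e}+o(1)\Bigr)q^e.
\]
The denominator is nonzero because $0<\eps_q<1$.

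The only point needing care is the expansion of $\eps_q$ when $e=1/2$. There the second factor has exponent $3/2$, while the cross term from the first factor has order $q^{-1}$. Both are still $o(q^{-1/2})$, so the leading term $q^{-e}$ is unaffected. It is enough to note that every correction term has exponent strictly larger than $e$, since $2e>e$ and $e+1>e$. This uses $e>0$ essentially, which is why $Q^+$ is excluded.
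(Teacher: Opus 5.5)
Your proof is correct and follows the paper's argument essentially step for step. Like the paper, you use vertex-transitivity via Witt's theorem and the expansion $\eps_q=q^{-e}+O_{d,e}(q^{-e-1}+q^{-2e})$ of the non-opposition probability. You then apply Lemma~\ref{lem:default} to get $H(d_\Gamma(U,X))\leq(e+o(1))q^{-e}\log q$ and substitute $\log N_d=\bigl(de+\binom d2+o(1)\bigr)\log q$ into Theorem~\ref{thm:vt-entropy}. Your additions, the positivity of the denominator and the sharper $o(1)$ rather than $o(\log q)$ error terms, are valid refinements that the paper leaves implicit.
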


\begin{proof}
By Witt's extension theorem, the full isometry group of the form acts
transitively on generators \cite{TaylorClassicalGroups}, so
Theorem~\ref{thm:vt-entropy} applies.  Let
\[
        \eps_q=\Prob(d_\Gamma(U,X)\neq d)
        =\Prob(X\cap U\neq0).
\]
By \eqref{eq:dual-opposition-probability},
\[
        \eps_q
        =1-\prod_{j=1}^d(1+q^{-(e+j-1)})^{-1}.
\]
Expanding this finite product, with $d$ and $e$ fixed, gives
\begin{equation}\label{eq:dual-nonopposition-asymptotic}
        \eps_q
        =q^{-e}+O_{d,e}(q^{-e-1}+q^{-2e}).
\end{equation}
Thus
\[
        \log(1/\eps_q)=e\log q+o(\log q).
\]
In particular, $\eps_q\leq1/2$ for all sufficiently large admissible $q$.
The distance response has the fixed alphabet $\{0,1,\ldots,d\}$, so
Lemma~\ref{lem:default} gives
\[
        H(d_\Gamma(U,X))
        \leq
        \eps_q\log(1/\eps_q)+O_d(\eps_q)
        =(e+o(1))q^{-e}\log q.
\]
On the other hand, \eqref{eq:dual-generator-count} gives
\[
        \log N_d
        =\sum_{j=1}^d\log(q^{e+j-1}+1)
        =\left(de+\binom d2+o(1)\right)\log q.
\]
Substituting into Theorem~\ref{thm:vt-entropy} yields
\[
        \mu(\Gamma(q,d,e))
        \geq
        \frac{\left(de+\binom d2+o(1)\right)\log q}
        {(e+o(1))q^{-e}\log q}
        =
        \left(\frac{de+\binom d2}{e}+o(1)\right)q^e.
\]
\end{proof}

There is a useful geometric reading of the proof.  For a generator $W$, 
let
\[
        \mathcal N(W)=\{Z\in\mathcal G_d:Z\cap W\neq0\}
\]
be its non-opposition neighborhood.  If two generators lie outside
$\bigcup_{s\in S}\mathcal N(s)$, then both are opposite every landmark in
$S$ and hence have the same constant distance vector $(d)_{s\in S}$.
Therefore a resolving set must cover all but at most one generator by 
these
non-opposition neighborhoods.  Since each neighborhood has relative size
$(1+o(1))q^{-e}$, this already explains the scale $q^e$.  The entropy
calculation also accounts for the full information content $\log N_d$ and
produces the leading constant in Theorem~\ref{thm:dual-lower}.

\subsection{Simultaneous non-opposition}

To obtain an upper bound by random sampling, it is not enough to know the
response distribution around one fixed generator.  We need a lower bound 
on
the probability that a random landmark distinguishes an arbitrary pair
$U\neq V$, uniformly over the possible values of $\dim(U\cap V)$.  The 
main
point is that simultaneous non-opposition to both $U$ and $V$ is of 
smaller
order than non-opposition to either one separately.

From \eqref{eq:dual-nonopposition-asymptotic}, uniformly in $W\in\mathcal 
G_d$,
\begin{equation}\label{eq:dual-one-nonopposition}
        \Prob(Z\in\mathcal N(W))=(1+o(1))q^{-e},
        \qquad Z\sim\Unif(\mathcal G_d).
\end{equation}

\begin{lemma}\label{lem:dual-both}
Assume $d\geq2$ and $e>0$ are fixed.  If $U,V\in\mathcal G_d$ are distinct
and $Z\sim\Unif(\mathcal G_d)$, then, uniformly over all $U\neq V$,
\[
        \Prob(Z\cap U\neq0,\ Z\cap V\neq0)
        =O_{d,e}(q^{-e-1}+q^{-2e})
        =o(q^{-e}).
\]
\end{lemma}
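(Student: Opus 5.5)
The plan is a union bound over pairs of singular points, followed by the residue counts \eqref{eq:dual-residue-count}. If $Z\cap U\neq0$ and $Z\cap V\neq0$, choose points $p\in\Pts(U)$ and $p'\in\Pts(V)$ with $p,p'\leq Z$. Hence
\[
        \Prob(Z\cap U\neq0,\ Z\cap V\neq0)
        \leq
        \sum_{p\in\Pts(U)}\ \sum_{p'\in\Pts(V)}\Prob(p+p'\leq Z).
\]
I will split the sum into the diagonal terms $p=p'$ and the off-diagonal terms $p\neq p'$. For each class I bound the number of terms and the probability of each term.

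\emph{Diagonal terms.} Here $p=p'\in\Pts(U\cap V)$. Since $U\neq V$, we have $\dim(U\cap V)\leq d-1$, so there are at most $\qbinom{d-1}1=O(q^{d-2})$ such points. A fixed singular point lies in $N_{d-1}$ generators, so
\[
        \Prob(p\leq Z)=N_{d-1}/N_d=1/(q^{e+d-1}+1)=O(q^{-e-d+1}).
\]
The diagonal contribution is therefore $O(q^{d-2}\cdot q^{-e-d+1})=O(q^{-e-1})$.

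\emph{Off-diagonal terms.} For $p\neq p'$, the event $p+p'\leq Z$ forces $p+p'$ to be a singular line. In particular $p'\leq p^\perp$; otherwise the term vanishes. When it is a singular line, it lies in $N_{d-2}$ generators, so
\[
        \Prob(p+p'\leq Z)=\frac{N_{d-2}}{N_d}=\frac{1}{(q^{e+d-1}+1)(q^{e+d-2}+1)}=O(q^{-2e-2d+3}).
\]
Now count the admissible pairs.
\begin{itemize}[leftmargin=2em]
\item If $p\notin V$, then $p^\perp\cap V$ is a proper subspace of $V$. Otherwise $V+p$ would be singular, contradicting the maximality of $V$. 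Hence $p^\perp\cap V$ is a hyperplane of $V$ and contains $O(q^{d-2})$ points. With $O(q^{d-1})$ choices of $p$, this gives $O(q^{2d-3})$ pairs.
\item If $p\in U\cap V$, then $p'$ ranges over all $O(q^{d-1})$ points of $V$. There are only $O(q^{d-2})$ such $p$, so this again gives $O(q^{2d-3})$ pairs.
\end{itemize}
The off-diagonal contribution is thus $O(q^{2d-3}\cdot q^{-2e-2d+3})=O(q^{-2e})$.

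Adding the two contributions gives $O_{d,e}(q^{-e-1}+q^{-2e})$. Every count above depends only on $d$ and $e$ and not on $\dim(U\cap V)$, so the bound is uniform over all pairs $U\neq V$. Since $e>0$, both terms are $o(q^{-e})$.

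I expect the only real difficulty to be the off-diagonal count. A naive count of all pairs $(p,p')$ gives $q^{2d-2}$ terms, which would yield only $O(q^{-2e+1})$. That is not $o(q^{-e})$ when $e\leq1$. The saving of one factor of $q$ comes from two facts: perpendicularity confines $p'$ to a hyperplane of $V$ whenever $p\notin V$, and points of $U\cap V$ are themselves scarce. So the careful step is the maximality argument for $p^\perp\cap V$.
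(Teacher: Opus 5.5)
Your proposal is correct and follows essentially the same route as the paper's proof. It uses the same split into a common point of $U\cap V$, contributing $O(q^{-e-1})$ via $N_{d-1}/N_d$, and ordered collinear pairs spanning a singular line, contributing $O(q^{-2e})$ via $N_{d-2}/N_d$; the $O(q^{2d-3})$ pair count comes from the same two cases $p\in U\cap V$ and $p\notin V$, where maximality of $V$ makes $V\cap p^\perp$ a proper subspace.
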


\begin{proof}
Suppose that $Z$ meets both $U$ and $V$.  Choose projective points
$p\in\Pts(Z\cap U)$ and $p'\in\Pts(Z\cap V)$.  If $p=p'$, then $Z$ 
contains
a point of $U\cap V$.  If $p\neq p'$, then $p$ and $p'$ lie in the 
singular
generator $Z$, so they span a singular line.  We bound these two cases
separately.

Since $U\neq V$, the vector dimension of $U\cap V$ is at most $d-1$.
Consequently,
\[
        |\Pts(U\cap V)|\leq\qbinom{d-1}{1}=O_d(q^{d-2}).
\]
A fixed singular point lies in $N_{d-1}$ generators.  By
\eqref{eq:dual-residue-count}, the probability that $Z$ contains at least 
one
common point of $U$ and $V$ is therefore at most
\[
        O_d(q^{d-2})\frac{N_{d-1}}{N_d}
        =O_d(q^{d-2})\frac{1}{q^{e+d-1}+1}
        =O_{d,e}(q^{-e-1}).
\]

It remains to count ordered pairs $(p,p')\in\Pts(U)\times\Pts(V)$ that 
span
a singular line.  If $p\in U\cap V$, then there are $O_d(q^{d-2})$ choices
for $p$.  We may overcount by allowing all $O_d(q^{d-1})$ choices for
$p'\in\Pts(V)$, contributing
$O_d(q^{2d-3})$ pairs.  Now suppose $p\in U\setminus V$.  The singular 
points
of $V$ collinear with $p$ are precisely the points of $V\cap p^\perp$.
This is a proper subspace of $V$: if $V\subseteq p^\perp$, then
$\langle V,p\rangle$ would be a singular subspace of vector dimension
$d+1$, contradicting the maximality of $V$.  Hence
\[
        |\Pts(V\cap p^\perp)|=O_d(q^{d-2}).
\]
There are $O_d(q^{d-1})$ choices for $p$, so the number of ordered 
collinear
pairs is again $O_d(q^{2d-3})$, uniformly in $U$ and $V$.

Each singular line lies in $N_{d-2}$ generators.  A union bound over the
ordered pairs therefore gives
\[
        O_d(q^{2d-3})\frac{N_{d-2}}{N_d}
        =O_d(q^{2d-3})
          \frac{1}{(q^{e+d-2}+1)(q^{e+d-1}+1)}
        =O_{d,e}(q^{-2e}).
\]
Adding the common-point and singular-line contributions proves the lemma.
\end{proof}

The estimate in Lemma~\ref{lem:dual-both} is stronger than what is needed
merely to distinguish one fixed pair.  It says that the generators meeting
both members of a pair form a lower-order exceptional set.  The upper 
bound
will amplify this fact by grouping generators into pencils.

\subsection{Panel pencils and the upper bound}

Viewed as a point--line geometry whose points are generators and whose 
lines
are pencils through panels, a dual polar space is a near $2d$-gon
\cite{ShultYanushka,CameronDualPolar}.  In particular, for every generator
$U$ and every pencil $\mathcal C(H)$ there is a unique member of the 
pencil
nearest to $U$; this is the \emph{gate} of $U$ on the pencil.  The pencils
are precisely the lines of this near polygon and, in the dual polar graph,
are maximal cliques; see also \cite[Sec.~9.4]{BCN}.  We retain a short
self-contained proof of the weak gate property needed below.

For a panel $H$, write
\[
        \mathcal C(H)=\{W\in\mathcal G_d:H\leq W\}
\]
for its pencil.  Thus $|\mathcal C(H)|=q^e+1$.  The advantage of taking 
the
whole pencil is that it is guaranteed to contain a generator non-opposite 
to
any prescribed target.

\begin{lemma}\label{lem:dual-pencil-meets}
Let $H$ be a panel and let $U$ be a generator.  Then some
$W\in\mathcal C(H)$ satisfies $W\cap U\neq0$.
\end{lemma}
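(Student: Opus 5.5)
The plan is a direct case analysis on $H\cap U$, using only polar-space geometry (perps and the extension property), not the counting results.

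If $H\cap U\neq0$, every member of the pencil $\mathcal C(H)$ contains $H$ and therefore meets $U$, so any $W\in\mathcal C(H)$ works. This case is immediate.

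Assume now $H\cap U=0$. The strategy is to build a generator through $H$ that picks up a point of $U$.

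1. Consider the subspace $H^\perp\cap U$. Since $\dim H=d-1$, the space $H^\perp$ has codimension $d-1$ in the ambient space $V$. Intersecting with the $d$-dimensional space $U$ therefore gives
\[
\dim(H^\perp\cap U)\geq d-(d-1)=1.
\]
Here I use that $\perp$ is taken with respect to the nondegenerate (polar) form, so $\dim H^\perp=\dim V-\dim H$. Choose a nonzero vector $u\in H^\perp\cap U$.

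2. Show that $\langle H,u\rangle$ is singular. The vector $u$ is singular because it lies in the singular subspace $U$, and it is orthogonal to $H$. Hence $\langle H,u\rangle$ is totally isotropic, and in the quadratic case totally singular, since $Q(h+\lambda u)=Q(h)+\lambda^{\sigma}\cdots$ reduces to $Q(h)+Q(\lambda u)+\beta(h,\lambda u)=0$.

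3. Conclude. Because $u\notin H$ (as $H\cap U=0$), the space $W=\langle H,u\rangle$ is singular of dimension $d$, hence a generator. It contains $H$, so $W\in\mathcal C(H)$, and it contains $u\neq0$, so $W\cap U\neq0$.

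The one point needing care is step 1 in characteristic $2$ for the parabolic quadric, where the associated bilinear form can be degenerate (with a one-dimensional radical), so the codimension count for $H^\perp$ might seem to fail. If the radical is involved, I would instead argue within the span of $H$ and $U$: inside the nondegenerate part $\langle H\rangle^\perp$, the count still gives codimension at most $d-1$. Alternatively, I can note that the radical is a nonsingular vector and so meets no singular subspace, which means the dimension of $H^\perp$ is at least $\dim V-(d-1)$ in all cases. This lower bound is all the argument needs.

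An alternative route is the near-polygon gate property, but the direct perp argument above is self-contained.
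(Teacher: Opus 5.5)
Your proof is correct and is essentially the paper's argument: a dimension count gives $\dim(U\cap H^\perp)\geq1$, a point of $U\cap H^\perp$ either lies in $H$ (so every member of the pencil meets $U$) or spans with $H$ a generator in $\mathcal C(H)$ meeting $U$; you only make the case split on $H\cap U$ first instead of on whether the chosen point lies in $H$. Your characteristic-$2$ worry is unnecessary: $H^\perp$ is cut out by $\dim H=d-1$ linear conditions for any form, degenerate or not, so $\dim H^\perp\geq\dim V-(d-1)$ holds in all cases, and the paper obtains the bound exactly this way.
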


\begin{proof}
Let $H^\perp$ denote the subspace orthogonal to $H$ under the defining
sesquilinear form, or under the polar form associated with the quadratic
form.  Since $\dim H=d-1$, orthogonality imposes at most $d-1$ independent
linear conditions.  Hence
\[
        \dim H^\perp\geq \dim V-(d-1),
\]
and therefore
\[
        \dim(U\cap H^\perp)
        \geq \dim U+\dim H^\perp-\dim V
        \geq1.
\]
Choose a singular point $p\leq U\cap H^\perp$.  If $p\leq H$, then every
generator in $\mathcal C(H)$ meets $U$.  Otherwise $H+p$ is singular of
vector dimension $d$, and hence is itself a generator.  Taking
$W=H+p$ proves the claim.
\end{proof}

Lemma~\ref{lem:dual-pencil-meets} is the weak existence part of the gate
property.  In rank two, it is the familiar generalized-quadrangle axiom:
given a point and a nonincident singular line, there is a unique point of
the line collinear with the given point.  Here a panel is a singular point,
and its pencil is the set of singular lines through it.  Thus sampling a
pencil means taking all lines through one selected point.

For distinct generators $U,V$, call a panel $H$ \emph{bad for $(U,V)$} if
no generator in its pencil distinguishes them.  Equivalently,
\[
        d_\Gamma(W,U)=d_\Gamma(W,V)
        \qquad\text{for every }W\in\mathcal C(H).
\]

\begin{remark}\label{rem:dual-gates}
Let $g_H(U)$ denote the gate of $U$ on $\mathcal C(H)$.  If
$W\neq g_H(U)$ lies in the same pencil, then
\[
        d_\Gamma(U,W)=d_\Gamma(U,g_H(U))+1,
\]
because $W$ is adjacent to the gate and the gate is the unique nearest
member.  It follows that $H$ is bad for $(U,V)$ if and only if
\[
        g_H(U)=g_H(V)
        \quad\text{and}\quad
        d_\Gamma(U,g_H(U))=d_\Gamma(V,g_H(V)).
\]
We do not need this exact characterization for the order bound, but it
identifies the structure that would have to be counted to improve the 
upper
constant.
\end{remark}

The next lemma is the amplification step.  A random generator meets both
$U$ and $V$ with probability $o(q^{-e})$; after passing to pencils, this
becomes a polynomially small probability that the entire pencil is bad.

\begin{lemma}\label{lem:dual-bad-panel}
Let $\mathcal P_{d-1}$ be the set of panels and put
\[
        \alpha=\min\{1,e\}.
\]
For fixed $d\geq2$ and $e>0$, uniformly over distinct generators $U,V$,
\[
 \frac{|\{H\in\mathcal P_{d-1}:H\text{ is bad for }(U,V)\}|}
      {|\mathcal P_{d-1}|}
 =O_{d,e}(q^{-\alpha}).
\]
\end{lemma}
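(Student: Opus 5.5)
The approach is to show that every bad panel forces its pencil to contain a generator meeting both $U$ and $V$, and then to convert the simultaneous non-opposition estimate of Lemma~\ref{lem:dual-both} into a panel count by double counting incident (panel, generator) pairs. The factor $q^e+1$ lost in passing from generators to pencils is exactly compensated, because Lemma~\ref{lem:dual-both} saves $q^{-1}$ or $q^{-e}$ over the single-target probability $q^{-e}$. That saving is what gives $O(q^{-\alpha})$.

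\emph{Step 1 (structural implication).} Let $H$ be bad for $(U,V)$. By Lemma~\ref{lem:dual-pencil-meets} there is $W\in\mathcal C(H)$ with $W\cap U\neq0$. By \eqref{eq:dual-distance} this means $d_\Gamma(W,U)<d$. Badness gives $d_\Gamma(W,V)=d_\Gamma(W,U)<d$, so $W\cap V\neq0$ as well. Hence every bad panel lies in at least one generator of the set
\[
  \mathcal B(U,V)=\{W\in\mathcal G_d:\ W\cap U\neq0,\ W\cap V\neq0\}.
\]
No case distinction is needed, for example when $H\leq U$: the implication holds verbatim.

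\emph{Step 2 (double counting).} I count incident pairs $(H,W)$ with $H\in\mathcal P_{d-1}$, $W\in\mathcal B(U,V)$, and $H\leq W$. By Step 1 the number of bad panels is at most this count. Each generator contains exactly $\qbinom d1$ hyperplanes, and all of them are panels, so
\[
  \#\{\text{bad }H\}\leq \qbinom d1\,|\mathcal B(U,V)|.
\]
Lemma~\ref{lem:dual-both} gives $|\mathcal B(U,V)|=O_{d,e}\bigl((q^{-e-1}+q^{-2e})N_d\bigr)$, uniformly in $U\neq V$. For the denominator, each panel lies in $q^e+1$ generators, so double counting panel--generator incidences gives
\[
  |\mathcal P_{d-1}|=\frac{N_d\qbinom d1}{q^e+1}.
\]

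\emph{Step 3 (conclusion).} Dividing, the ratio is at most
\[
  (q^e+1)\,O_{d,e}(q^{-e-1}+q^{-2e})=O_{d,e}(q^{-1}+q^{-e})=O_{d,e}(q^{-\alpha}),
\]
with uniformity inherited from Lemma~\ref{lem:dual-both}.

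The only real content is Step 1, namely noticing that Lemma~\ref{lem:dual-pencil-meets} together with badness transfers non-opposition from $U$ to $V$. The point to double-check is that "distance $<d$" is equivalent to a nontrivial intersection. This follows directly from \eqref{eq:dual-distance}. The rest is routine incidence counting.
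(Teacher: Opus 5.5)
Your proof is correct and follows the paper's argument essentially step for step. Lemma~\ref{lem:dual-pencil-meets} combined with badness forces some pencil member to meet both $U$ and $V$; this bounds the bad panels by $\qbinom d1|\mathcal B(U,V)|$, and the incidence identity $|\mathcal P_{d-1}|(q^e+1)=N_d\qbinom d1$ together with Lemma~\ref{lem:dual-both} then gives the ratio $O_{d,e}(q^{-1}+q^{-e})=O_{d,e}(q^{-\alpha})$ uniformly in $U\neq V$.
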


\begin{proof}
Let $H$ be bad for $(U,V)$.  By Lemma~\ref{lem:dual-pencil-meets}, some
$W\in\mathcal C(H)$ meets $U$.  Then $d_\Gamma(W,U)<d$.  Since the pencil
fails to distinguish $U$ and $V$, we also have
$d_\Gamma(W,V)=d_\Gamma(W,U)<d$, and hence $W$ meets $V$ as well.
Consequently every bad panel is contained in at least one generator 
meeting
both $U$ and $V$.

Each generator contains $\qbinom d1$ panels, so
\[
\begin{aligned}
 |\{H:H\text{ is bad for }(U,V)\}|
 &\leq
 \qbinom d1
 |\{W\in\mathcal G_d:W\cap U\neq0,\ W\cap V\neq0\}|.
\end{aligned}
\]
On the other hand, double-counting incident pairs $(H,W)$ with $H\leq W$
gives
\[
        |\mathcal P_{d-1}|(q^e+1)
        =N_d\qbinom d1.
\]
Dividing the preceding inequalities and applying
Lemma~\ref{lem:dual-both}, we obtain
\[
\begin{aligned}
 \frac{|\{H:H\text{ is bad for }(U,V)\}|}{|\mathcal P_{d-1}|}
 &\leq
 (q^e+1)
 \Prob_W(W\cap U\neq0,\ W\cap V\neq0)\\
 &=O_{d,e}(q^{-1}+q^{-e})
 =O_{d,e}(q^{-\alpha}).
\end{aligned}
\]
The estimate is uniform because Lemma~\ref{lem:dual-both} is uniform.
Here the pair $U,V$ may vary with $q$; only $d$, $e$, and the classical
type are fixed.
\end{proof}

One random generator has only a $\Theta(q^{-e})$ chance to distinguish a
fixed pair, which leads to a logarithmic covering cost when generators are
sampled independently.  A random pencil behaves differently: its failure
probability is already $O(q^{-\alpha})$.  Since the number of generator 
pairs
is only polynomial in $q$ for fixed rank, a constant number of pencils is
enough.

\begin{theorem}\label{thm:dual-upper}
Fix $d\geq2$ and a classical type with parameter $e>0$.  Put
\[
        A=de+\binom d2,
        \qquad
        \alpha=\min\{1,e\},
\]
and let $k$ be any integer satisfying
\[
        k>\frac{2A}{\alpha}.
\]
Then, for all sufficiently large admissible prime powers $q$,
\[
        \mu(\Gamma(q,d,e))\leq k(q^e+1).
\]
In particular,
\[
        \mu(\Gamma(q,d,e))=O_{d,e}(q^e).
\]
\end{theorem}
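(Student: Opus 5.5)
The proof will be a probabilistic first-moment argument built on Lemma~\ref{lem:dual-bad-panel}. Choose panels $H_1,\ldots,H_k$ independently and uniformly from $\mathcal P_{d-1}$, and set
\[
        S=\bigcup_{t=1}^k \mathcal C(H_t).
\]
Then $|S|\leq k(q^e+1)$. If $S$ resolves $\Gamma(q,d,e)$ with positive probability, some choice of panels does, and the bound $\mu(\Gamma(q,d,e))\leq k(q^e+1)$ follows.

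Fix an unordered pair of distinct generators $U\neq V$. The set $S$ fails to distinguish $U$ and $V$ only if no member of any pencil $\mathcal C(H_t)$ distinguishes them. This means every $H_t$ is bad for $(U,V)$. By independence and the uniform estimate of Lemma~\ref{lem:dual-bad-panel}, this event has probability at most $(Cq^{-\alpha})^k$. Here $C=C_{d,e}$ does not depend on the pair, which is exactly why the lemma is stated uniformly over pairs that may vary with $q$.

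A union bound over pairs then gives
\[
        \Prob(S\text{ does not resolve})
        \leq \binom{N_d}{2}C^kq^{-\alpha k}
        \leq N_d^2C^kq^{-\alpha k}.
\]
By \eqref{eq:dual-generator-count}, $N_d=\prod_{j=1}^d(q^{e+j-1}+1)\leq 2^dq^{A}$, where $A=de+\binom d2$. So the failure probability is $O_{d,e,k}(q^{2A-\alpha k})$. Since $k>2A/\alpha$, the exponent is negative, and the bound is below $1$ for all sufficiently large admissible $q$. This proves the theorem. The choice $k=k_{d,e}=\lfloor 2A/\alpha\rfloor+1$ gives the stated form in Theorem~\ref{thm:dual-main}, and the $O_{d,e}(q^e)$ conclusion is immediate.

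Two points need a brief check. First, $S$ may contain repeated generators when pencils overlap; this only shrinks $|S|$, so it does no harm. Second, a generator $W\in S$ equal to $U$ or $V$ distinguishes the pair trivially, since it is at distance $0$ from one and positive distance from the other. This case is already covered by the definition of a bad panel, so no special treatment is needed.

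The substantive work has already been done in Lemmas~\ref{lem:dual-both} and~\ref{lem:dual-bad-panel}. The only real obstacle here is bookkeeping: the constant from Lemma~\ref{lem:dual-bad-panel} must be uniform in the pair, and the count of pairs must be polynomial in $q$ with exponent exactly $2A$, so that the threshold $k>2A/\alpha$ comes out as stated.
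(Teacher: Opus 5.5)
Your proposal is correct and follows the paper's proof essentially step for step: independent uniform panels, the union of their pencils, the bound $(Cq^{-\alpha})^k$ from Lemma~\ref{lem:dual-bad-panel} and independence, and a union bound over the $O_{d,e}(q^{2A})$ pairs of generators. Your explicit estimate $N_d\leq 2^dq^{A}$ and your remarks about overlapping pencils and the case $W\in\{U,V\}$ are harmless clarifications of the same argument.
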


\begin{proof}
Choose panels $H_1,\ldots,H_k$ independently and uniformly from
$\mathcal P_{d-1}$, and set
\[
        S=\bigcup_{i=1}^k\mathcal C(H_i).
\]
For a fixed unordered pair $\{U,V\}$, Lemma~\ref{lem:dual-bad-panel}
gives a constant $C=C(d,e)$ such that
\[
        \Prob(H_i\text{ is bad for }(U,V))\leq Cq^{-\alpha}
\]
for all sufficiently large $q$.  Independence therefore gives
\[
        \Prob(S\text{ fails to distinguish }U,V)
        \leq C^kq^{-k\alpha}.
\]

By \eqref{eq:dual-generator-count},
\[
        N_d=\Theta_{d,e}(q^A),
\]
so the number of unordered pairs of generators is $O_{d,e}(q^{2A})$.
The union bound yields
\[
\begin{aligned}
        \Prob(S\text{ fails to resolve }\Gamma(q,d,e))
        &\leq O_{d,e}(q^{2A-k\alpha}).
\end{aligned}
\]
Since $k\alpha>2A$, this probability tends to zero.  Thus for all
sufficiently large $q$ there is a choice of the panels for which $S$
resolves the graph.  Finally,
\[
        |S|\leq\sum_{i=1}^k|\mathcal C(H_i)|
        =k(q^e+1),
\]
which proves the theorem.
\end{proof}

\begin{proof}[Proof of Theorem~\ref{thm:dual-main}]
The lower bound is Theorem~\ref{thm:dual-lower}.  For the upper bound, 
take
\[
        k=k_{d,e}
        =\left\lfloor
          \frac{2\left(de+\binom d2\right)}{\min\{1,e\}}
          \right\rfloor+1
\]
in Theorem~\ref{thm:dual-upper}.  By construction,
$k\min\{1,e\}>2\left(de+\binom d2\right)$, so that theorem applies for all
sufficiently large admissible $q$.  The two estimates give the asserted
$\Theta_{d,e}(q^e)$ conclusion.
\end{proof}

For the symplectic and parabolic orthogonal families, $e=1$ and
$A=d+\binom d2=d(d+1)/2$.  Hence one may take
\[
        k_{d,1}=d^2+d+1,
\]
and Theorem~\ref{thm:dual-main} becomes
\[
        \left(\frac{d(d+1)}2+o(1)\right)q
        \leq \mu(\Gamma(q,d,1))
        \leq (d^2+d+1)(q+1).
\]
More generally, the ratio of the leading upper coefficient $k_{d,e}$ to
the entropy lower coefficient $A/e$ is at most
\[
        \frac{2e}{\min\{1,e\}}+\frac{e}{A},
\]
where the second term accounts for rounding $k_{d,e}$ to an integer.  Thus
the remaining gap is a bounded-factor problem, not a difference of
asymptotic scale.

\begin{remark}
The same pair estimate, instead applied to independently sampled 
generators, gives
an $O_{d,e}(q^e\log q)$ resolving set: a fixed pair is distinguished with
probability $\Theta(q^{-e})$.  Indeed, meeting exactly one member of the
pair is enough, and Lemma~\ref{lem:dual-both} shows that meeting both is
lower order.  A union bound over all pairs then costs a factor of $\log q$.
A pencil packages $q^e+1$ highly correlated
generators
whose common failure probability is already polynomially small.  This is
the step that removes the logarithm.
\end{remark}

The resulting upper bound is also polynomially smaller than the
incidence-rank bound of Bailey and Spiga.  Their bound is
\[
        \frac{(q^{d+e-1}+1)
        (q^{d+e-1}-q^{e-1}+q-1)}
        {(q^{e-1}+1)(q-1)}.
\]
For example, in the symplectic and parabolic orthogonal cases $e=1$, this
is $\Theta_d(q^{2d-1})$, whereas Theorem~\ref{thm:dual-upper} gives
$O_d(q)$.  The symplectic subfamily had previously been treated separately
by Guo, Wang, and Li \cite{GuoWangLiSymplectic}; the pencil argument applies
uniformly to all classical types with $e>0$.  In rank two, specialized 
generalized-quadrangle constructions can give
better explicit upper constants \cite[Sec.~4]{BaileySpiga}, but the 
theorem
above supplies a common lower bound and extends without change to every
fixed rank.

\section{Further questions}\label{sec:questions}

The examples above suggest that metric and class dimension in
finite-geometric schemes are often controlled by the first exceptional
stratum from a fixed landmark.  The dual polar result also leaves several
more geometric problems.
\begin{enumerate}[label=(\arabic*), leftmargin=2.2em]
\item Determine the leading constant of
\[
        q^{-e}\mu(\Gamma(q,d,e))
\]
for fixed $d$ and $e>0$.  The entropy lower constant is
$(de+\binom d2)/e$, while the panel-pencil construction differs from it by
only a bounded factor.  A natural first step is to count bad panels
separately for the different values of $\dim(U\cap V)$, or equivalently to
use the gate characterization in Remark~\ref{rem:dual-gates}.

\item Understand dual polar metric dimension when the rank $d$ grows with
$q$.  The present estimates are uniform only after $d$ and the classical
type are fixed.

\item Treat the hyperbolic family $e=0$.  Opposition is no longer a
high-probability default relation, so a different distribution or a
different geometric bundle is required.  The related halved dual polar,
Ustimenko, and Hemmeter graphs are natural test cases.

\item Determine whether the leading entropy constant for the bilinear 
forms
graph is the true leading constant of $\mu(H_q(n,d))$, and find exact or
leading-constant estimates for attenuated-space class dimension beyond 
rank
one.

\item Strengthen the entropy method beyond singleton subadditivity.
Pairwise relation distributions in an association scheme are encoded by
intersection numbers, so Shearer-type inequalities or nonuniform target
distributions may recover correlations that the present argument discards.
\end{enumerate}

\section*{Acknowledgments}

The author thanks Jesse Geneson for many valuable discussions and helpful feedback.
Codex with GPT-5.6 was used to
explore and critique proofs and assist with exposition and revision.

\end{document}